\def\eps{\varepsilon}
\def\R{\mathbb R}
\def\d{\mathrm{d}}
\def\I{1 \!  \!{\mathrm{I}}}
\def\sech{{\, \mathrm{sech} \, }}
\def\edit#1{\textcolor{black}{#1}}
\def\tn#1{\textnormal{#1}}
\newtheorem{theorem}{Theorem}[section]
\newtheorem{lemma}{Lemma}[section]
\newtheorem{hypothesis}{Hypothesis}[section]
\newtheorem{remark}{Remark}[section]
\begin{document}

\title{\edit{Local stable and unstable manifolds} and their control in nonautonomous finite-time flows}

\author{Sanjeeva Balasuriya \\
{\small School of Mathematical Sciences, University of Adelaide, SA 5005, Australia}}

\date{\textcolor{red}{Under consideration for publication in {\em Journal of Nonlinear Science}, February 2016}
}

\maketitle


\begin{abstract}
\edit{It is well-known that stable and unstable manifolds strongly influence fluid motion in unsteady flows.  These emanate
from hyperbolic trajectories, with the structures moving nonautonomously in time.  The local directions of emanation at each instance in time is the focus of this article. Within a nearly autonomous
setting, it is shown that these time-varying directions can be characterised through the accumulated effect of velocity shear. 
Connections to Oseledets spaces and projection operators in exponential dichotomies are established.}  Availability of data for both infinite and finite time-intervals is considered. 
With microfluidic flow control in mind, a methodology for manipulating these directions in any prescribed time-varying fashion by applying a local velocity shear is developed.   The results are verified for both smoothly and discontinuously time-varying directions using finite-time Lyapunov exponent fields, and excellent agreement is obtained.
\end{abstract}



\section{Introduction}
\label{sec:intro}

Transport in nonautonomous (unsteady) flows such as in oceanic and atmospheric jets and eddies/vortices/rings, or microfluidic 
channels with several fluids, is well-known to be strongly influenced by dominant flow structures and separators.
Examples include the invisible flow barrier off the west coast of Florida which protected the coast during the Deepwater
Horizon oil spill, the Antarctic polar vortex, and the interface between a microdroplet and a carrier fluid in a microchannel.
These structures all move with time, and the ability to demarcate, manipulate, or control the flow across, such structures
has profound implications at a range of scales from the geophysical to the nanofluidic.  Over the last decade or two, the
connection between such flow barriers and the dynamical systems concepts of stable and unstable manifolds has become
well-established \cite{halleryuan,hallerreview,peacockhaller,peacockdabiri,microfluid_review}.  The results of this article are motivated
by the desire to control these flow separators in a way that one chooses, with the idea of being able to govern the transport
occurring in microfluidic devices.  The fact that the flow is of low Reynolds number at microscales means that fluid mixing
is suppressed; yet in many devices the wish is to improve mixing.  By manipulating the flow separators, i.e., the stable and
unstable manifolds, one can focus directly on the transport templates which regulate fluid mixing.

In autonomous (steady) flows defined for infinite times, the concepts of saddle-like fixed points and their eigenvalues and eigenvectors
enable the determination of stable and unstable manifolds locally.  The {\em global} structure of these invariant manifolds
form a template which governs transport in space, yet their {\em local} structure (and indeed global definitions in terms of
exponential decays) are associated directly with the eigenvalues and eigenvectors.   In particular, the directions in which these emanate locally from the saddle point are defined by the eigenvectors.  Unfortunately, in {\em nonautonomous} flows defined
only for {\em finite} times---i.e., for any realistic situation based for example on observational or experimental 
data---{\em instantaneously computed} fixed points, eigenvalues or eigenvectors have no relevance to transport.  

There are a variety of diagnostic techniques which purportedly identify transport templates in nonautonmous flows which are
defined over finite times.
There are variously defined as entities of extremal attraction/repulsion \cite{farazmandhaller,blazevskihaller,teramoto}, minimal instantaneous flux/deformation \cite{halleryuan,minimumflux},
extremal curve/surface length/area deformation \cite{hallervariational,hallerberonvera} or extremal curvature deformation \cite{mabollt}, or ridges extracted
from finite-time Lyapunov exponent (FTLE) fields \cite{peacockdabiri,peacockhaller,chian,green,shadden,karraschhaller,kelley,branickiwiggins2010} or transfer (Perron-Frobenius) operator singular vector 
fields \cite{froylandpadberg2014,froyland2013,froylandfinite2010,froylandpadberg2009}.   Alternative methods identify these using topological braid theory \cite{allshousethiffeault}, ergodic theory \cite{budisicmezic}
or various averages along trajectories \cite{pojehallermezic,mancho_2013,mancho_2003,mezicscience}.   
Each of these diverse methods offer computational tools for extracting precisely its own definition;
there are few theoretically established connections between them \cite{hallerreview}.
 The {\em hyperbolic}-type
transport templates  can be construed as {\em boundaries} between
coherent structures, and are therefore of great importance in any transport characterisation.
These are characterised in Table~\ref{table:analogues}, and 
in all these situations, one focus is essentially determining the third column of Table~\ref{table:analogues}, which
will be different for each diagnostic method.
For the specific case of FTLEs (noting that FTLEs are not always valid \cite{branickiwiggins2009,karrasch,norgardbremer,schindler,hallerreview}), this column would arguably have
(a) intersections of forward- and backward-time FTLE ridges\footnote{Not all such intersections correspond to 
hyperbolic trajectories, since \edit{hetero}clinic trajectories are also associated with such intersections.}, 
(b) FTLE values, and (c) tangents to FTLE ridges.  

\begin{table}[t]
\centering
\begin{tabular}{| c | c | c |} \hline
{\bf {\small Autonomous, infinite-time}} & {\bf {\small Nonautonomous, infinite-time}} & {\bf {\small Nonautonomous, finite-time}} \\ \hline 
Saddle fixed points & Hyperbolic trajectories & (a) \\ \hline
Eigenvalues & Exponential decay rates & (b) \\ \hline
Eigenvectors & Tangents to stable/unstable manifolds & (c) \\ \hline
\end{tabular}
\caption{Analogues of important flow entities.}
\label{table:analogues}
\end{table}

Even for nonautonomous infinite-time flows, the definitions in the second column of Table~\ref{table:analogues} are
not easy to apply for detection of these coherent entities.  The time-varying hyperbolic trajectories and their stable and unstable manifolds are defined
{\em implicitly}, using the concept of exponential dichotomies \cite{coppel,palmer,battellilazzari,unsteady}, which is difficult to use
computationally.   Methods which would enable easier characterisation and control of these entities would
be of value---and in particular, those generalisable to finite-time flows.  In this spirit, this
article develops techniques which are relevant when the flow is nearly autonomous, that is, when the nonautonomous
component appears as a perturbation on an autonomous (steady) flow.  A frequently considered assumption in 
oceanographic and other flows \cite{gaultier,miller,dovidio2004},  this form of model has additional appeal since it can be considered a particular
realisation of a stochastic perturbation, a theme which is attracting considerable recent attention \cite{guckenheimer,lamb,nguyenthulam}.  From the microfluidic perspective, this assumption has value when the idea is to
disturb a steady laminar flow (in which transport is suppressed) by imposing an agitation velocity in order to promote transport
\cite{stroock,whitesides,wangturbulence,droplet,microfluid_review}.

\edit{For nonautonomous infinite-time flows, the tangent vectors to stable and unstable manifolds, computed locally at
the associated hyperbolic trajectory location at each time $ t $, form well-defined entities.  This characterisation moreover
generalises the concept of eigenvectors at saddle points in autonomous flows, since such eigenvectors are certainly also
local tangent vectors to stable/unstable manifolds, as defined in terms of the classical stable/unstable manifold theorem
\cite{guckenheimerholmes,arrowsmithplace}.  Tangent vectors to stable/unstable manifolds are also invariant under
affine coordinate transformations (a feature not shared by eigenvectors), thereby satisfying the concept of `objectivity'
\cite{hallerreview}.}
\edit{Hence, this article focusses specifically on these entities, and develops in Section~\ref{sec:theory} a theory valid for nearly autonomous two-dimensional flows.  Section~\ref{sec:hyperbolic}
characterises the time-varying location of the hyperbolic trajectory (i.e., the anchor point to which the local stable/unstable
manifolds are attached), and Section~\ref{sec:eigenvector} establishes that the directions in which the local stable and unstable manifolds emanate can be expressed as a rotation from the autonomous
eigenvector directions.  It turns out that this rotation is governed by the accumulated effect of the time-varying
local velocity shear. The relationship of the tangent
vectors to the concepts
of Oseledets spaces \cite{oseledets,froyland2013,froylandcocycle2010,liang,ginelli}, and projection operators associated with exponential dichotomies \cite{coppel,palmer,battellilazzari},  is elucidated 
in Section~\ref{sec:theoryconnect}.}  

Section~\ref{sec:finite} addresses the relaxation of the infinite-time results to finite times, 
when entities reminiscent of stable and unstable manifolds appear to be present.  (Defining them using exponential
dichotomies fails since {\em anything} can be imputed to be bounded by an exponentially decaying function over finite
times; various approaches \cite{doan,karrasch,ducsiegmund,berger} have been suggested to tackle this.)
A common approach for finite times is to
consider the flow from a certain fixed initial time to a fixed final time, governed by a specified nonautonomous velocity
field \cite{karrasch,doan,froylandfinite2010,mosovskymeiss,froyland2013,froylandpadberg2009,mabollt}.  The flow map for this time interval can be obtained computationally, for example, by flowing forward many
trajectories, and thereafter transport issues are usually analysed using just this one-step flow map.  Nonautonomy
is sometimes (but not always) implicit in this approach 
by allowing either the initial or final time to vary, but this requires performing calculations for differing flow
maps \cite{farazmandhaller,mezicscience,hallerreview}.   Finite-time reality might be better approximated if assuming that 
the velocity data is available over a fixed finite-time interval $ [-T,T] $
(possibly at discrete values), but that nonautonomous information is required for times $ t \in (-T,T) $, using {\em all} the
available data at each instance in time \cite{farazmandhaller}.  Under this interpretation, in the nearly autonomous situation, 
leading-order expressions for the finite-time versions of
hyperbolic trajectories and local tangent vectors to their associated stable and unstable
manifolds are furnished.  This approach effectively
supplies an alternative---which generalises stable/unstable manifolds in nearly autonomous flows---to 
the third column in Table~\ref{table:analogues}. The development highlights the seldom noticed fact that if data
from a finite-time sample $ t \in [-T,T] $ is used to determine invariant manifold-like entities, velocities {\em outside} 
the interval could affect the locations of the computed entities.  To account for such errors due to lack of data outside
the interval $ [-T,T] $,  an error estimate for the tangent vectors, as a function of both
$ t $ and $ T $, is discussed in (\ref{eq:finitebound}), under the condition of bounded velocity shear.  

\edit{Section~\ref{sec:control} poses the inverse problem: given a desired hyperbolic trajectory location and
local stable/unstable manifold emanation direction, each of which is varying in time in a specified fashion, is it possible
to determine the required velocity conditions to achieve this?  This can be thought of as a control problem, i.e., determining
the control velocity needed to force local stable/unstable manifolds to behave in a required fashion.}  This result nicely
complements already existing results on controlling hyperbolic trajectories \cite{saddle_control,controlnd}, and supports 
the (as yet not easily implementable) first result on controlling stable and unstable manifolds \cite{manifold_control}.  Since
Theorem~\ref{theorem:control} provides the methodology of pointing stable and unstable manifolds in user-desired time-varying
directions, this provides insight into how best to focus energy in the most relevant areas, in order to obtain desired mixing
characteristics.

The theoretical arguments of Section~\ref{sec:theory} are verified in Section~\ref{sec:taylorgreen}, where a nearly autonomous flow is computationally
analysed through the generation of spatially and temporally discrete data over a finite time-interval $ [-T,T] $.  This is
performed via a two-step analysis: first, the theory is used to determine the nonautonomous velocity for $ t \in (-T,T) $ 
needed to have 
hyperbolic trajectories follow user-defined motion, with their stable and unstable manifolds also rotating in a specified
time-varying fashion, and second, the data is used to {\em a posteriori} verify the errors implied by the finite-time
definitions.   Two different types of nonautonomous perturbations are evaluated: a 
time-periodic manifold rotation with fairly large nonautonomous part, and an abruptly changing hyperbolic
trajectory location and manifold rotation.  These examples were chosen to deliberately challenge the expected realm
of viability of the theory, but in both cases, excellent results were obtained when compared with FTLE computations.
The techniques therefore offer substantial promise in controlling directions of emanation of coherent structure 
boundaries in finite-time nonautonomous flows, with additional insights into how these directions are related to the
local velocity shear.

\section{Theoretical framework}
\label{sec:theory}

Consider for $ x \in \Omega \subset \R^2 $ the nonautonomous dynamical system
\begin{equation}
\dot{x} = F(x,t,\eps)
\label{eq:flow}
\end{equation}
where the parameter $ \eps \in [0,\eps_0) $ where $ \eps_0 \ll 1 $.   The vector field $ F $ is assumed to be defined
for $ t \in [-T,T] $ (if $ T < \infty $), with (\ref{eq:flow}) being valid for $ t \in (-T, T) $; 
the $ T = \infty $ situation is the classical infinite-time scenario which shall
be the first focus.  An alternative
representation of the nonautonomous flow of (\ref{eq:flow}) is to consider its augmented system
\begin{equation}
\left. \begin{array}{l l l }
\dot{x} & = & F(x,t,\eps) \\
\dot{t} & = & 1
\end{array} \right\}
\label{eq:flowappend}
\end{equation}
in the appended $ \Omega \times (-T,T) $ phase space.

\begin{hypothesis}
\label{hyp:F}
$ F: \Omega \times (-T,T) \times [0,\eps_0) \rightarrow \R^2 $ is such that 
\begin{itemize}
\item[(a)] $ F \in {\mathrm{C}}_{\mathrm{unif}}^{\edit{3}} \left( \Omega \times (-T,T) \times [0,\eps_0) \right) $;
\item[(b)] The function $ f(x) := F(x,t,0) $ is independent of $ t $;
\item[(c)] There exists $ a \in \Omega $ such that $ f(a) = 0 $;
\item[(d)] $ D f(a) $ has eigenvalues $ \lambda_{\tn{s}} $ and $ \lambda_{\tn{u}} $ with corresponding normalised
eigenvectors $ v_{\tn{s}} $ and $ v_{\tn{u}} $, where $ \lambda_{\tn{s}} < 0 < \lambda_{\tn{u}} $.
\end{itemize}
\end{hypothesis}

Some comments on the above hypotheses are in order.  Hypotheses~\ref{hyp:F}(a) contains basic differentiability
and boundedness conditions on $ F $ in relation to $ (x,t,\eps) $.  Recall that the `unif' subscript on the $ {\mathrm{C}}^2 $ indicates that all derivatives up to second-order are uniformly bounded.  Hypothesis~\ref{hyp:F}(b) states that when $ \eps = 0 $, $ F $ is
autonomous, and can be represented by a function $ f(x) $.
Hypotheses~\ref{hyp:F}(c,d) guarantee the presence of a hyperbolic fixed point $ a $ associated with this $ f $, with stable
and unstable manifolds emanating in the directions $ v_{\tn{s}} $ and $ v_{\tn{u}} $, with associated exponential
decay rates $ \lambda_{\tn{s}} $ and $ \lambda_{\tn{u}} $, respectively.  While the normalised $ v_{\tn{s,u}} $ are only unique up to 
a sign, suppose a particular choice has been made; this effectively chooses one of the two branches of the 
stable/unstable manifold.  When viewed in the appended phase-space of (\ref{eq:flowappend}), the hyperbolic fixed
point $ a $ of (\ref{eq:flow}) transforms to a {\em hyperbolic trajectory} $ (a,t) $.  An important omission from the
hypotheses is that $ F $ be area-preserving; the results will work for compressible as well as incompressible flows.

\subsection{\edit{Hyperbolic trajectories}}
\label{sec:hyperbolic}

What is the analogous entity to $ a $ when $ \eps \ne 0 $?  If using the fixed point characterisation, this might be thought of
as a curve of instantaneous fixed/stagnation
points $ a(t) $ which satisfies $ F(a(t), t, \eps) = 0 $ for any $ t $ and $ \eps $.    While this characterisation
continues to be used in some fields, this is well-known to {\em not} have any significant meaning with regards to fluid
transport: these do not follow the Lagrangian flow, are not associated with stable/unstable manifolds, and this
characterisation depends on the frame of reference.   The governing characteristic of $ a $ of interest is
not that it is a fixed point when $ \eps = 0 $, but that {\em it possesses stable and unstable manifolds}.  
The analogous entity when $ \eps \ne 0 $ is a time-varying trajectory $ a(t) $ which is called a {\em hyperbolic trajectory}, and 
whose definition requires {\em exponential
dichotomy conditions} \cite{coppel,palmer,battellilazzari} (see also Section~\ref{sec:theoryconnect}) 
which are extremely difficult to verify in nonautonomous
situations.  
However, Yi's persistence of integral manifold results \cite{yi,yistability}, building on Hale's results for more restrictive
time-dependence \cite{hale},
imply the persistence of the $ \eps = 0 $ hyperbolic trajectory $ (a,t) $ of
(\ref{eq:flowappend}) as a \edit{nearby} entity $ \left( a(t), t \right) $ when  $ \eps \ne 0 $.  

The results will be phrased in terms of the `nonautonomy'
\begin{equation}
g(x,t, \eps) := F(x,t,\eps) - F(x,t,0) = F(x,t,\eps) - f(x) \, , 
\label{eq:g}
\end{equation}
which represents how much $ F(x,t,\eps) $ differs from the autonomous vector field $ f(x) $.
By Hypothesis~\ref{hyp:F}, there exists a constant $ C $ such that
\begin{equation}
\left| g \left( x, t, \eps \right)  \right|  + \left| D g \left( x, t, \eps \right) \right| \edit{+ \left| D^2 g \left( x, t, \eps \right) \right|}
\le C \eps \quad {\mathrm{for}} \quad (x,t,\eps) \in \Omega \times (-T,T) \times [0,\eps_0) \, ;
\label{eq:gbound}
\end{equation}
i.e., $ g = {\mathcal O}(\eps) $.  \edit{Next, define $ G^\perp $ as the rotation of a vector $ G $ by $ + \pi/2 $, and so}
\begin{equation}
v_{\tn{s}}^\perp := \left( \begin{array}{lr} 0 & -1 \\ 1 & 0 \end{array} \right) v_{\tn{s}} \quad {\mathrm{and}} \quad
v_{\tn{u}}^\perp := \left( \begin{array}{lr} 0 & -1 \\ 1 & 0 \end{array} \right) v_{\tn{u}} \, .
\label{eq:perp}
\end{equation}
Thus, $ g $'s components
in the normal directions to the vectors $ v_{\tn{s,u}} $ can be defined by
\begin{equation}
g_{\tn{s,u}}^{\edit{\dagger}}(x,t, \eps) := g(x,t, \eps)^{\edit{\top}} v_{\tn{s,u}}^\perp \, .
\label{eq:gcomp}
\end{equation}

\begin{theorem}[\edit{Hyperbolic trajectory}] 
\label{theorem:hyperbolic}
Suppose $ T = \infty $.  The hyperbolic trajectory $ a(t) $ of (\ref{eq:flow}) in relation to $ a $ can
be represented by the projections
\begin{equation}
\left[ a(t) - a \right]^{\edit{\top}} v_{\tn{s}}^\perp = -   \, e^{\lambda_{\tn{u}} t} \int_t^\infty e^{- \lambda_{\tn{u}} \tau} g_{\tn{s}}^{\edit{\dagger}} \left( a, \tau, \eps \right)  \, \d \tau + \eps^2 H_{\tn{s}}(t,\eps)
\label{eq:hyperbolics}
\end{equation}
and
\begin{equation}
\left[ a(t) - a \right]^{\edit{\top}} v_{\tn{u}}^\perp = \, e^{\lambda_{\tn{s}} t} \int_{-\infty}^t  e^{ -\lambda_{\tn{s}} \tau} g_{\tn{u}}^{\edit{\dagger}} \left( a, \tau, \eps \right)  \, \d \tau + \eps^2 H_{\tn{u}}(t,\eps) \, ,
\label{eq:hyperbolicu}
\end{equation}
where there exist $ K_{\tn{s,u}} > 0 $ such that $ \left| H_{\tn{s,u}}(t,\eps) \right| + \left| \frac{\partial H_{\tn{s,u}}(t,\eps)}{\partial t} \right| 
\le K_{\tn{s,u}}  $ for all $ t \in \R $.
\end{theorem}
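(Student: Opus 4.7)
The plan is to substitute $a(t) = a + y(t)$, where Yi's persistence result guarantees the existence of the hyperbolic trajectory as a bounded $\mathcal{O}(\eps)$ perturbation of $a$ for all $t\in\R$. Plugging into (\ref{eq:flow}) and Taylor expanding about $a$, using $f(a)=0$ and the $\mathcal{O}(\eps)$ bounds on $g$ in (\ref{eq:gbound}), gives
\begin{equation*}
\dot y = Df(a)\,y + g(a,t,\eps) + R(y,t,\eps),
\end{equation*}
where the remainder $R$ gathers the quadratic-in-$y$ terms from $f$ together with $g(a+y,t,\eps) - g(a,t,\eps)$, and satisfies $|R(y,t,\eps)| = \mathcal{O}(\eps^2)$ uniformly by (\ref{eq:gbound}) and $|y| = \mathcal{O}(\eps)$.

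Next I would decompose $y = \alpha(t)\,v_\tn{s} + \beta(t)\,v_\tn{u}$ in the eigenbasis of $Df(a)$. Since $v_\tn{s}^\perp$ annihilates $v_\tn{s}$ and $v_\tn{u}^\perp$ annihilates $v_\tn{u}$, the two projections isolate the complementary coordinates,
\begin{equation*}
y^\top v_\tn{s}^\perp = \beta\,(v_\tn{u}^\top v_\tn{s}^\perp), \qquad y^\top v_\tn{u}^\perp = \alpha\,(v_\tn{s}^\top v_\tn{u}^\perp),
\end{equation*}
with the scalar $v_\tn{u}^\top v_\tn{s}^\perp \ne 0$ because the two eigendirections are distinct. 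Taking the inner product of the linearized equation with $v_\tn{s}^\perp$, using $Df(a)v_\tn{u} = \lambda_\tn{u} v_\tn{u}$, and recognising $g^\top v_\tn{s}^\perp = g_\tn{s}^\dagger$ yields the scalar ODE
\begin{equation*}
\frac{\d}{\d t}\bigl(y^\top v_\tn{s}^\perp\bigr) = \lambda_\tn{u}\,\bigl(y^\top v_\tn{s}^\perp\bigr) + g_\tn{s}^\dagger(a,t,\eps) + R(y,t,\eps)^\top v_\tn{s}^\perp,
\end{equation*}
and symmetrically one obtains the analogue with $\lambda_\tn{s}$ and $v_\tn{u}^\perp$ for the stable projection.

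The hyperbolicity $\lambda_\tn{s} < 0 < \lambda_\tn{u}$ picks out a unique bounded solution of each scalar ODE via variation of parameters: the unstable equation forces integration from $t$ to $+\infty$ (to suppress the growing homogeneous mode), while the stable equation forces integration from $-\infty$ to $t$. For the unstable projection this produces
\begin{equation*}
y^\top v_\tn{s}^\perp = -\,e^{\lambda_\tn{u} t}\!\int_t^\infty\! e^{-\lambda_\tn{u}\tau}\,g_\tn{s}^\dagger(a,\tau,\eps)\,\d\tau \; - \; e^{\lambda_\tn{u} t}\!\int_t^\infty\! e^{-\lambda_\tn{u}\tau}\,R(y(\tau),\tau,\eps)^\top v_\tn{s}^\perp\,\d\tau,
\end{equation*}
and defining $\eps^2 H_\tn{s}(t,\eps)$ to be the second term reproduces (\ref{eq:hyperbolics}); (\ref{eq:hyperbolicu}) follows symmetrically.

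The main obstacle is the uniform bound on $H_\tn{s,u}$ and its time derivative. The $\mathcal{O}(\eps^2)$ size of $R$ combined with the convolution identity $e^{\lambda_\tn{u} t}\!\int_t^\infty e^{-\lambda_\tn{u}\tau}\,\d\tau = 1/\lambda_\tn{u}$ immediately yields a $t$-uniform bound on $|H_\tn{s}|$. For $\partial_t H_\tn{s}$, differentiating under the integral via Leibniz's rule produces a boundary term proportional to $R(y(t),t,\eps)^\top v_\tn{s}^\perp$ together with $\lambda_\tn{u}\,\eps^2 H_\tn{s}$, both $\mathcal{O}(\eps^2)$, so dividing by $\eps^2$ gives the desired bound. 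The delicate point is ensuring these estimates remain $t$-uniform rather than degrading as $|t| \to \infty$; this is where the \emph{unif} subscript in Hypothesis~\ref{hyp:F}(a) together with the uniform second-derivative bound on $g$ in (\ref{eq:gbound}) play their essential role, by bounding $R$ (and hence $\dot R$ along $y(t)$) in a manner independent of $t$.
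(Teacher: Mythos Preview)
Your argument is correct and essentially self-contained, but it follows a different route from the paper.  The paper does not give a direct proof here: it cites Theorem~2.10 of \cite{tangential} for the leading-order formula and defers the error bound to the machinery developed for Theorem~\ref{theorem:eigenvector} in Appendix~\ref{app:eigenvector}.  That machinery parametrises the perturbed stable manifold by an arclength-like variable $p$, expresses the normal displacement through a Melnikov integral $M^{\tn{s}}(p,t)$, and recovers the hyperbolic trajectory by taking $p\to\infty$ along the manifold.

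Your approach bypasses the manifold parametrisation entirely.  You linearise $\dot y = Df(a)y + g(a,t,\eps) + R$ and exploit the (implicit) fact that in two dimensions $v_{\tn{s}}^\perp$ and $v_{\tn{u}}^\perp$ are left eigenvectors of $Df(a)$ for $\lambda_{\tn{u}}$ and $\lambda_{\tn{s}}$ respectively; projecting onto them decouples the system into two scalar ODEs whose unique bounded solutions are forced by the signs of $\lambda_{\tn{s,u}}$.  This is more elementary and avoids importing the Melnikov apparatus, at the cost of being specific to the hyperbolic-trajectory statement (the paper's Melnikov framework is reused for the tangent-vector result, where your projection trick would not suffice).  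Your handling of the error term is also cleaner than the paper's: the Leibniz differentiation $\partial_t(\eps^2 H_{\tn{s}}) = \lambda_{\tn{u}}\eps^2 H_{\tn{s}} + R^\top v_{\tn{s}}^\perp$ gives the $\partial_t H_{\tn{s}}$ bound directly, whereas the paper's route requires the separate Lemma~\ref{lemma:error}.
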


\begin{proof}
This formulation is a slight modification to Theorem~2.10 by Balasuriya \cite{tangential}, to which the reader is 
referred to for the proof for the leading-order expression; the bounding of the error term arises from an argument
similar to that of Theorem~\ref{theorem:eigenvector} which will be shown in detail, and hence will be skipped here.
\end{proof}

 
 \begin{remark} \normalfont
 \label{remark:alpha}
While Theorem~\ref{theorem:hyperbolic} gives the {\em projections} of $ a(t) - a $  \edit{in the directions $ v_{\tn{s,u}}^\perp $}
to 
leading-order, an expression for $ a(t) $ \edit{can} be obtained using elementary trigonometry as \cite{tangential}
\begin{equation}
a(t) = a +   \left[ \alpha_{\tn{u}}(t,\eps) v_{\tn{u}}^\perp  + 
\frac{ \alpha_{\tn{u}}(t,\eps) \left( v_{\tn{u}}^{\edit{\top}}  v_{\tn{s}} \right) - \alpha_{\tn{s}}(t,\eps) }{
v_{\tn{s}}^{\edit{\top}} v_{\tn{u}}^\perp  } \, v_{\tn{u}} 
\right] + {\mathcal O}(\eps^2) \, ,
\label{eq:hyperbolic}
\end{equation}
where
\begin{equation}
\alpha_{\tn{s}}(t,\eps) := - e^{\lambda_{\tn{u}} t} \int_t^\infty e^{- \lambda_{\tn{u}} \tau} g_{\tn{s}}^{\edit{\dagger}} \left( a, \tau, \eps \right)   \, \d \tau
\label{eq:alphas}
\end{equation}
and
\begin{equation}
\alpha_{\tn{u}}(t,\eps) :=  e^{\lambda_{\tn{s}} t} \int_{-\infty}^t  e^{ -\lambda_{\tn{s}} \tau} g_{\tn{u}}^{\edit{\dagger}} \left( a, \tau, \eps \right)  \, \d \tau \, .
\label{eq:alphau}
\end{equation}
 \end{remark}

\subsection{\edit{Local stable and unstable manifold directions}}
\label{sec:eigenvector}

When $ \eps = 0 $, the dynamical system (\ref{eq:flow}) is autonomous, possessing a saddle fixed point $ a $ with 
eigenvalues $ \lambda_{\tn{s}} < 0 $ and $ \lambda_{\tn{u}} > 0 $, and corresponding normalised eigenvectors $ v_{\tn{s}} $ and $ v_{\tn{u}} $.
The nonautonomous finite-time version of the fixed point $ a $ has been addressed, and this section approaches the determination of  the
local tangent vectors to the stable and unstable manifolds at the point $ a(t) $ at each time $ t $. 
It is reasonable to expect the new (time-dependent) tangent vector directions $ v_{\tn{s,u}}(t) $ to be $ {\mathcal O}(\eps) $-close to $ v_{\tn{s,u}} $ in each time-slice $ t $.  

\begin{theorem}[\edit{Local stable/unstable manifold directions}]
\label{theorem:eigenvector}
Suppose $ T = \infty $.  Consider the intersections of the stable and unstable manifolds of $ \left( a(t), t \right) $ in a time-slice $ t 
\in \R $.  The local tangential direction to the stable manifold at the point $ a(t) $ can be obtained by an anticlockwise
rotation of $ v_{\tn{s}} $ by an angle
\begin{equation}
\theta_{\tn{s}}(t) = - \, e^{(\lambda_{\tn{u}}-\lambda_{\tn{s}})t} \int_t^\infty e^{(\lambda_{\tn{s}} - \lambda_{\tn{u}})\tau} v_{\tn{s}}^{\edit{\top}}  \edit{D}  g_{\tn{s}}^{\edit{\dagger}} (a,\tau,\eps)  \, \d \tau  + \eps^2 E_{\tn{s}}(t,\eps) \, ,
\label{eq:slopes}
\end{equation}
where there exists a constant $ K_{\tn{s}} $ such that 
\[
 \left| E_{\tn{s}}(t,\eps) \right| + \left| \frac{\partial E_{\tn{s}}(t,\eps)}{\partial t} \right| \le K_{\tn{s}} \quad {\mathrm{for}} \, (t,\eps)  \in 
 \edit{\R}
 \times [0,\eps_0) \, .
 \]
Similarly, the local tangential direction to the unstable manifold at $ a(t) $
is obtained by rotating $ v_{\tn{u}} $ by an
anticlockwise angle
\begin{equation}
\theta_{\tn{u}}(t)  =   e^{(\lambda_{\tn{s}}-\lambda_{\tn{u}})t} \int_{-\infty}^t e^{(\lambda_{\tn{u}} - \lambda_{\tn{s}})\tau} v_{\tn{u}}^{\edit{\top}}  \edit{D} g_{\tn{u}}^{\edit{\dagger}} (a,\tau,\eps) \, \d \tau + \eps^2 E_{\tn{u}}(t,\eps) \, ,
\label{eq:slopeu}
\end{equation}
where there exists a constant $ K_{\tn{u}} $ such that 
\[
 \left| E_{\tn{u}}(t,\eps) \right| + \left| \frac{\partial E_{\tn{u}}(t,\eps)}{\partial t} \right| \le K_{\tn{u}} \quad {\mathrm{for}} \, (t,\eps) \in 
 \edit{\R} \times [0,\eps_0) \, .
\]
\end{theorem}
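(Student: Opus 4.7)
The plan is to study the variational equation along the hyperbolic trajectory $a(t)$, from which the tangent directions to the stable and unstable manifolds at $a(t)$ emerge as the stable and unstable subspaces of a nearly autonomous planar linear ODE.

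First I would set up the linearisation. Writing $x = a(t) + y$ in (\ref{eq:flow}) and using $\dot a(t) = F(a(t),t,\eps)$, a Taylor expansion in $y$ gives $\dot y = DF(a(t),t,\eps)\,y + {\mathcal O}(|y|^2)$. The tangent to the stable manifold at $a(t)$ coincides with the stable subspace $E^{\tn{s}}(t)$ of the variational equation $\dot\xi = DF(a(t),t,\eps)\,\xi$, meaning the set of initial $\xi$ whose solutions decay as $\tau\to\infty$. Using Theorem~\ref{theorem:hyperbolic} so that $a(t)-a = {\mathcal O}(\eps)$, together with the decomposition $F=f+g$ and (\ref{eq:gbound}), Taylor expansion of each Jacobian about $a$ produces
\[
DF(a(t),t,\eps) = Df(a) + \eps\,\Psi(t,\eps) + {\mathcal O}(\eps^2),
\]
with $\eps\,\Psi(t,\eps) = Dg(a,t,\eps)$ to leading order and $\Psi$ uniformly bounded in $t$ by Hypothesis~\ref{hyp:F}(a).

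Second I would compute $E^{\tn{s}}(t)$ perturbatively in the $\{v_{\tn{s}},v_{\tn{u}}\}$ basis. Writing $\xi = c_{\tn{s}}v_{\tn{s}} + c_{\tn{u}}v_{\tn{u}}$ turns the variational equation into a diagonal system perturbed by $\eps\,\Psi$. Parametrising $E^{\tn{s}}(t) = \tn{span}(v_{\tn{s}} + \gamma(t)v_{\tn{u}})$ with $\gamma(t) = {\mathcal O}(\eps)$, invariance of $E^{\tn{s}}(t)$ under the variational flow yields the scalar ODE
\[
\dot\gamma = (\lambda_{\tn{u}}-\lambda_{\tn{s}})\,\gamma + \Psi_{\tn{us}}(t) + {\mathcal O}(\eps^2),
\]
where $\Psi_{\tn{us}}$ is the $(v_{\tn{u}},v_{\tn{s}})$-entry of $\Psi$ in this basis. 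Since the homogeneous solutions grow like $e^{(\lambda_{\tn{u}}-\lambda_{\tn{s}})t}$, the requirement that $\xi\in E^{\tn{s}}(t)$ generate a decaying orbit forces the Duhamel integral to be taken from $+\infty$:
\[
\gamma(t) = -\,e^{(\lambda_{\tn{u}}-\lambda_{\tn{s}})t}\int_t^\infty e^{(\lambda_{\tn{s}}-\lambda_{\tn{u}})\tau}\,\Psi_{\tn{us}}(\tau)\,\d\tau + {\mathcal O}(\eps^2).
\]
The unstable counterpart $E^{\tn{u}}(t) = \tn{span}(v_{\tn{u}} + \gamma'(t)v_{\tn{s}})$ is obtained identically, integrating from $-\infty$ instead.

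Third I would translate the slope $\gamma$ into the rotation angle $\theta_{\tn{s}}$ and identify $\Psi_{\tn{us}}$ with the integrand of (\ref{eq:slopes}). Decomposing $v_{\tn{s}}^\perp = p\,v_{\tn{s}} + q\,v_{\tn{u}}$ using $v_{\tn{s}}^\perp\cdot v_{\tn{s}}=0$ and $|v_{\tn{s}}^\perp|=1$, together with the planar identity $(v_{\tn{s}}^\perp\cdot v_{\tn{u}})^2 = 1-(v_{\tn{s}}\cdot v_{\tn{u}})^2$, gives $\theta_{\tn{s}} = (v_{\tn{s}}^\perp\cdot v_{\tn{u}})\,\gamma + {\mathcal O}(\eps^2)$. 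Extracting the $v_{\tn{u}}$-component of $Dg(a,t,\eps)v_{\tn{s}}$ via the dual vector $v_{\tn{s}}^\perp/(v_{\tn{s}}^\perp\cdot v_{\tn{u}})$ yields $\Psi_{\tn{us}}(t) = v_{\tn{s}}^{\top}\,Dg_{\tn{s}}^{\dagger}(a,t,\eps)/(v_{\tn{s}}^\perp\cdot v_{\tn{u}}) + {\mathcal O}(\eps)$, so the $(v_{\tn{s}}^\perp\cdot v_{\tn{u}})$ factors cancel between conversion and integrand, producing exactly (\ref{eq:slopes}); (\ref{eq:slopeu}) then follows by swapping the roles of s and u.

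The main obstacle will be upgrading this formal calculation to the uniform error bound $|E_{\tn{s}}(t,\eps)| + |\partial_t E_{\tn{s}}(t,\eps)| \le K_{\tn{s}}$. I would formulate the local stable manifold as a graph in $\{v_{\tn{s}},v_{\tn{u}}\}$-aligned coordinates centred at $a(t)$, represent its slope at the base point as $\gamma(t) + \eps^2 E_{\tn{s}}(t,\eps)$, and derive a Lyapunov--Perron-type integral equation on $\R$ for $E_{\tn{s}}$. The exponential dichotomy afforded by $\lambda_{\tn{s}}<0<\lambda_{\tn{u}}$, combined with the uniform $C^{3}$ bounds of Hypothesis~\ref{hyp:F}(a) and (\ref{eq:gbound}), then gives a contraction on a Banach space of bounded functions, yielding boundedness of $E_{\tn{s}}$; differentiating the integral equation in $t$ and applying the same estimates controls $\partial_t E_{\tn{s}}$, paralleling the argument invoked for Theorem~\ref{theorem:hyperbolic}.
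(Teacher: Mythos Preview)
Your route is genuinely different from the paper's. The paper never linearises along $a(t)$; instead it invokes an existing Melnikov-type parametrisation $x^{\tn{s}}(p,t)=\bar x^{\tn{s}}(p)+\eps\,M^{\tn{s}}(p,t)\,f^\perp/|f|^2+\cdots$ of the perturbed stable manifold, anchored at the \emph{unperturbed} orbit $\bar x^{\tn{s}}(p)$, computes the tangent $\partial_p x^{\tn{s}}$, and sends $p\to\infty$ using the asymptotics $\bar x^{\tn{s}}(p)\sim a+c\,v_{\tn{s}}e^{\lambda_{\tn{s}}p}$. The rotation then falls out of the combination $M^{\tn{s}}_p-\lambda_{\tn{s}}M^{\tn{s}}$ (the $p$-derivative acting on $h(\bar x^{\tn{s}}(\tau-t+p),\tau)$ is what produces the $Dg$ factor), and the uniform $\mathcal O(\eps^2)$ bound is handled by a separate lemma on the normal remainder $J^{\tn{n}}$. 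Your variational/Riccati argument is more elementary and is exactly the Oseledets-subspace picture the paper itself alludes to in Section~\ref{sec:theoryconnect}; it bypasses the Melnikov machinery entirely.

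There is, however, a gap in your expansion of $DF(a(t),t,\eps)$. Since $a(t)-a=\mathcal O(\eps)$ but is not zero, expanding $Df(a(t))$ about $a$ gives
\[
DF(a(t),t,\eps)=Df(a)+D^2f(a)\bigl[a(t)-a\bigr]+Dg(a,t,\eps)+\mathcal O(\eps^2),
\]
so $\eps\Psi$ carries \emph{two} $\mathcal O(\eps)$ pieces, not only $Dg(a,t,\eps)$. The Hessian contribution $D^2f(a)[a(t)-a]$ feeds into $\Psi_{\tn{us}}$ with the same weight as the shear term and is not generically zero: take $f=(\lambda_{\tn{s}}x_1,\ \lambda_{\tn{u}}x_2+x_1^2)$ with $g$ spatially constant, so that $Dg\equiv 0$, yet the stable subspace of $\dot\xi=DF(a(t),t,\eps)\,\xi$ tilts by an $\mathcal O(\eps)$ amount through the off-diagonal entry $2a_1(t)$ of $Df(a(t))$. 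The paper's Melnikov route, being anchored at $\bar x^{\tn{s}}$ rather than at $a(t)$, never expands $Df$ at the perturbed trajectory and so does not see this term explicitly in its derivation; on your route you must either argue that the $D^2f$ contribution is $\mathcal O(\eps^2)$ after integration (it is not, in the example above), or carry it as an additional leading-order term. The assertion ``$\eps\Psi(t,\eps)=Dg(a,t,\eps)$ to leading order'' does not stand as written.
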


\begin{proof}
See Appendix~\ref{app:eigenvector}.
\end{proof}

\begin{figure}[t]
\includegraphics[width=0.5 \textwidth, height=0.3 \textheight]{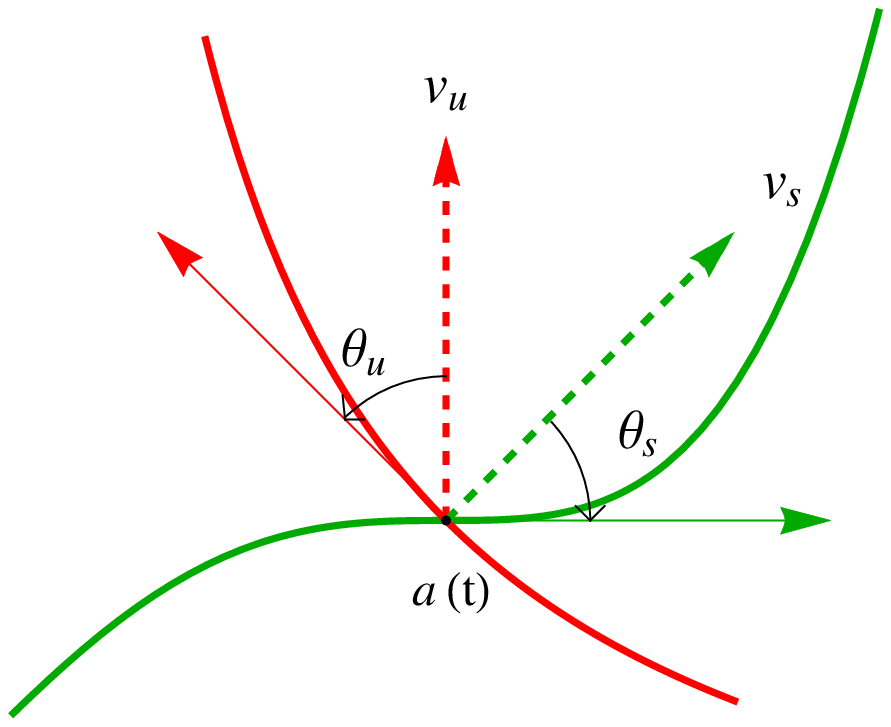}
\includegraphics[scale=0.55]{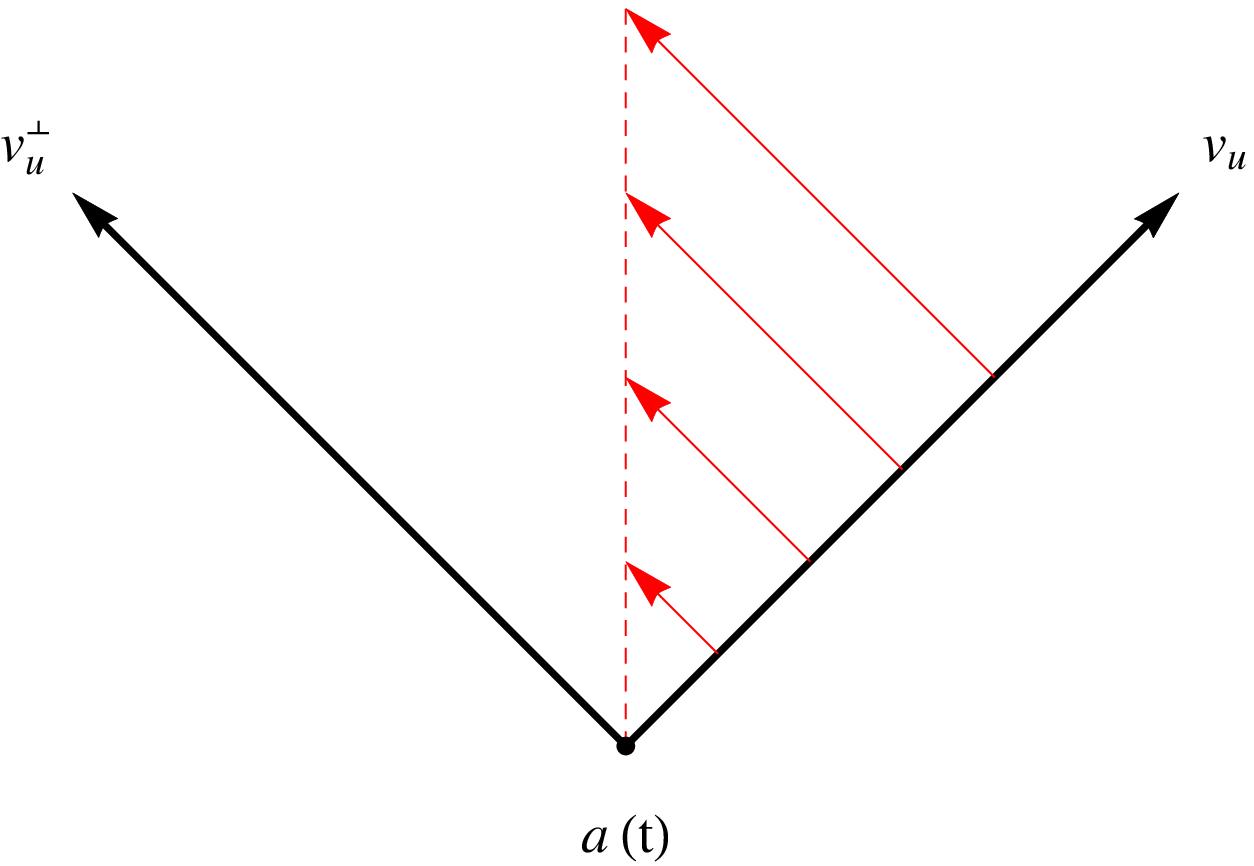}
\caption{(Left) Tangent vectors to stable (green) and unstable (red) manifolds in a time-slice $ t $, as expressed by
(\ref{eq:slopes}) and (\ref{eq:slopeu}).  The dashed arrows are the $ \eps = 0 $ \edit{tangent vector directions}. (Right) Illustration of 
a shear velocity profile tangential to the $ v_{\tn{u}}^\perp $ direction which might intuitively be 
thought to rotate $ v_{\tn{u}} $ in the anticlockwise direction.}
\label{fig:slope}
\end{figure}

The angular rotations are illustrated in the left panel of Fig.~\ref{fig:slope} at a general instance $ t $.  The dashed arrows are
the unperturbed eigendirections associated with $ \eps = 0 $, and the tangent vectors to the stable and unstable 
manifolds are obtained by rotating these anticlockwise by angles $ \theta_{\tn{s,u}}(t) $.  In the situation pictured
in Fig.~\ref{fig:slope}, $ \theta_{\tn{s}}(t) < 0 $ and $ \theta_{\tn{u}}(t) > 0 $.

Additional physical insight into Theorem~\ref{theorem:eigenvector} arises from the observation that the key quantity which is being integrated over all relevant time (either backwards \edit{or} forwards from time $ t $ 
depending on whether the stable or unstable manifold is being considered) involves a term
\begin{equation}
\sigma_{\tn{s,u}}(t,\eps)  := \edit{v_{\tn{s,u}}^\top D g_{\tn{s,u}}^{\edit{\dagger}} (a,\tau,\eps) } =
v_{\tn{s,u}} \cdot \nabla \left[  v_{\tn{s,u}}^\perp  \edit{\cdot g(a,\tau,\eps) } \right]   \, .
\label{eq:shear}
\end{equation}
\edit{The alternative notation using $ \nabla $ highlights that} this is by definition the {\em shear} of the nonautonomous component of the velocity (i.e., proportional to the shear 
strain associated with the velocity
field $ g $) in the $ v_{\tn{s,u}}^\perp $ directions, since it represents the directional derivative of $ g  \cdot v_{\tn{s,u}}^\perp $
in the direction of $ v_{\tn{s,u}} $.   For intuition as to why the shear affects the rotation of the tangent vectors, see 
the right panel of Fig.~\ref{fig:slope} in which a velocity profile for $ g $ is shown in relation to 
the $ v_{\tn{u}} $ and $ v_{\tn{u}}^\perp $ vectors.  In this picture, $ g $ is purely in the $ v_{\tn{u}}^\perp $ direction, and it is
increasing in the coordinate along the $ v_{\tn{u}} $ direction.  Thus $ \sigma_{\tn{u}} $ is positive.  
However, the velocity situation in Fig.~\ref{fig:slope} intuitively
will push the further parts of the vector $ v_{\tn{u}} $  more than parts near $ a(t) $, and thus the vector $ v_{\tn{u}} $ would be
expected to rotate in the anticlockwise direction.   This is the positive direction of rotation; positive $ \sigma_{\tn{u}} $ corresponds
to positive $ \theta_{\tn{u}} $, as is clear from (\ref{eq:slopeu}).  A similar intuition on why the shear rotates $ v_{\tn{s}} $ is
possible.

\subsection{\edit{Connections to alternative characterisations}}
\label{sec:theoryconnect}

Theorem~\ref{theorem:eigenvector} in association with Theorem~\ref{theorem:hyperbolic} enables an intuitive 
geometric characterisation of the exponential dichotomy conditions \cite{coppel,palmer,battellilazzari} 
related to the nonautonomous flow (\ref{eq:flow}).  First consider
$ \eps = 0 $, when $ a $ is a hyperbolic trajectory.  
Let $ Y(t) $ be a fundamental matrix solution to the linearised flow around $ a $, i.e.,
\begin{equation}
 \dot{y} = D f(a) y \, ,
 \label{eq:steadyvariation}
 \end{equation}
and for convenience choose $ Y(0) = \I $, the identity.  Exponential dichotomies \cite{coppel,palmer,battellilazzari}
state that in this situation there is a projection $ P $ and constants $ K_{\tn{s,u}} $ such that
\begin{equation}
\begin{array}{ll} 
\left\| Y(s) P Y^{-1}(t) \right\| \le K_{\tn{s}} e^{\lambda_{\tn{s}} (s-t)} & ~{\mathrm{if}}~ s \ge t \, ,  \\
\left\| Y(s) (\I - P) Y^{-1}(t) \right\| \le K_{\tn{u}} e^{\lambda_{\tn{u}} (s-t)} & ~{\mathrm{if}}~s \le t \, .
\end{array}
\label{eq:exponentialdichotomy}
\end{equation}
Now, it is easily verified that a solution to (\ref{eq:steadyvariation}) which takes the value $ y(t) $ at time $ t $ can be stated as 
$ y(t)  = Y(t) y(0) $ in terms of the fundamental matrix solution.  \edit{I}f $ y(0) $ is
in the range of $ P $, then one can easily use the first exponential dichotomy condition to show that
\[
\left| y(s) \right| \le K_{\tn{s}} e^{\lambda_{\tn{s}} s} \left| y(0) \right| \quad {\mathrm{for}}~ s \ge 0 \, ;
\]
see for example Appendix~A in \cite{unsteady}.  This means that if $ y(0) $ were chosen in $ {\mathcal R} (P) $, the range
of $ P $, the
subsequent solution will decay exponentially with rate $ \lambda_{\tn{s}} $.  This solution followed in time will take the
form $ y(t) = Y(t) P w $, which will therefore lie on the stable fibre, and thus the tangent vector space to the stable manifold at a
general time $ t $ is given by $ {\mathcal R} \left( Y(t) P \right) $.  
 This ostensibly 
depends on time, but for the autonomous equation (\ref{eq:steadyvariation}), it actually turns out to not.
This is since if $ y(0) = k v_{\tn{s}} $ for any $ k $, then $ y(t) = k e^{\lambda_{\tn{s}} t} v_{\tn{s}} $, which continues to be in the same direction
as $ v_{\tn{s}} $.  Specifically, $ P $ in this case can be defined through $ P w = \left( v_{\tn{s}}^\top w \right) v_{\tn{s}} $, and so
$ {\mathcal R} \left( Y(t) P \right) = {\mathcal R} \left( v_{\tn{s}} \right) $.
 Similarly, $ {\mathcal R} \left( Y(t) \left[ \I - P \right] \right) = {\mathcal R} (v_{\tn{u}}) $, the
unstable subspace.
Now when $ \eps \ne 0 $, the relevant linearised equation around the hyperbolic trajectory would be 
\begin{equation}
\dot{y} = D F \left( a(t), t \right) y \, ,
\label{eq:unsteadyvariation}
\end{equation}  
which is now nonautonomous.
An exponential dichotomy condition such as (\ref{eq:exponentialdichotomy}) must be satisfied for the
fundamental matrix $ \tilde{Y}(t) $ of (\ref{eq:unsteadyvariation}), but with different
(but $ {\mathcal O}(\eps) $-close) constants $ \tilde{K}_{\tn{u,s}} $ and $ \tilde{\lambda}_{\tn{s,u}} $ and projection $ \tilde{P} $.
These are difficult to determine in general.
The stable fibres at a general time $ t $ are then given by  $  {\mathcal R} \left( \tilde{Y}(t) \tilde{P} \right) $ which
now generically depends on $ t $; this represents exactly $ {\mathcal R} \left( v_{\tn{s}}(t) \right) $.  The connection to
the work in this article is that
a unit vector of $ {\mathcal R} \left( \tilde{Y}(t) \tilde{P} \right) $ can be obtained from a unit vector $ v_{\tn{s}} $ of
$ {\mathcal R} \left( Y(t) P \right) $ by rotating by $ \theta_{\tn{s}}(t) $, to leading-order in $ \eps $.  A similar statement
holds for the unstable fibres.

The tangent vectors computed in Theorem~\ref{theorem:eigenvector} also have a strong connection to
Oseledets spaces.  The time-variation of the tangent vectors indicates the
$ t $-parametrisation of the basis vectors of the 
stable and unstable Oseledets spaces \cite{oseledets,froyland2013,froylandcocycle2010,liang,ginelli} 
associated with the variational equation (\ref{eq:unsteadyvariation}) evaluated at the hyperbolic trajectory.  That is,
the Oseledets splitting  associated with this specific trajectory at time $ t $ is  $  
{\mathcal R} \left( \tilde{Y}(t) \tilde{P} \right) \oplus {\mathcal R} \left( \tilde{Y}(t) \left[ \I - 
\tilde{P} \right] \right) $, and this is furnished to leading-order by Theorem~\ref{theorem:eigenvector}.

\edit{The tangent vector directions encapsulate the directions in which, if initial conditions to the variational
equation are chosen in that direction, maximal attraction/repulsion occurs.  In this sense, these directions are intuitively
what one might obtain if using standard FTLE methods, but if the directions of maximality associated with the Lyapunov
exponent is also recorded.  However, the methods of this article only apply to FTLE computations/exponential dichotomies/Oseledets spaces at the hyperbolic trajectory
location, and the theoretical connection is valid in infinite-time, since that is required for the definition of these entities.
For finite times, the connections are not as straightforward.  The next section defines and analyses one way in which such
a connection can be made.}

\subsection{Application to finite-time situation}
\label{sec:finite}

For $ T = \infty $, it was possible to rigorously define stable and unstable manifolds, and therefore their local tangent vectors
\edit{were} well-defined.  If $ T < \infty $, these cannot be defined in the normal way.  Is it possible to modify the results for this situation?  Under the nearly autonomous ansatz, this {\em is} possible in a self-consistent way.

Suppose $ T < \infty $, and velocity data $ u(x,t) $ is available for $ \Omega \times [-T,T] $, and there is confidence that
the nearly autonomous ansatz is reasonable.  For the sake of simplicity we shall 
still write $ x \in \Omega $ and
$ t \in [-T,T] $, where for typical applications $ x $ will live on some discrete grid over $ \Omega $, and
$ t $ will be a discrete sampling of points in $ [-T,T] $.  It may be possible to decompose $ u(x,t) $ as a sum of a steady and a small
unsteady velocity quickly because of prior knowledge (and this is what shall be done in Section~\ref{sec:taylorgreen}).  If not,
a decomposition might be performed as follows, and checked for validity.   Define
\[
f(x) := \frac{1}{2T} \int_{-T}^T u(x,t) \, \d t \quad {\mathrm{and}} \quad
g(x,t) := u(x,t) - f(x) \, ; 
\]
in these and in other calculations outlined in this section, it is understood that suitable discrete versions (i.e., a Simpson's
rule evaluation of the integral above) would be necessary.
Letting
\[
\left\| f \right\| := \sup_{\Omega} \left| f(x) \right| \quad {\mathrm{and}} \quad
\left\| g \right\| := \sup_{\Omega \times [-T,T]} \left| g(x,t) \right| \, , 
\]
the data shall be thought of as coming from a nearly autonomous velocity field if the norm of $ g $ is much smaller than the
norm of $ f $.  To be specific, the condition for the nearly autonomous ansatz to be valid is that
$ \eps := \left\| g \right\| \ll \left\| f \right\| $.  If so, there is a base steady flow $ f $ 
which is perturbed \edit{by} the nonautonomous velocity $ g $.  Under the understanding that $ F(x,t,\eps) = f(x) + g(x,t) = u(x,t) $,
this is consistent with the notation of the previous sections, but $ \eps $ here is a derived parameter, and its presence in $ g $
and $ F $ is hidden.  Now, $ g $ is only defined for $ t \in (-T,T) $, and imagine extending to $ \R $ through
\begin{equation}
\renewcommand{\arraystretch}{1.3}
\tilde{g}(x,t) = \left\{ \begin{array}{ll}
g(x,t) & ~{\mathrm{if}} \, t \in [-T,T] \, , \\
0 & ~{\mathrm{if}} \, t \notin [-T,T] \, .
\end{array} \right.
\label{eq:gextend}
\end{equation}
Now consider the {\em infinite-time} flow
\begin{equation}
\dot{x} = f(x) + \tilde{g}(x,t)
\label{eq:infinite}
\end{equation}
for $ (x,t) \in \Omega \times \R $.  What has been done here is that the finite-time nearly autonomous flow has been
extended outside the time domain in which data is available, such that the velocity is steady (with a form derived
from the dominant characteristics of the available velocity field) outside $ (-T,T) $.  This is a reasonable assumption if the
data is obtained from a flow which, for physical or other reasons, is expected to be nearly steady; the `averaged' behaviour
outside of which the data is available can then be assumed to be steady, with a form derived from the data itself.
Given the infinite-time nature of (\ref{eq:infinite}), entities such as hyperbolic
trajectories and stable/unstable manifolds are well-defined for the extended flow (\ref{eq:infinite}), and 
Theorems~\ref{theorem:hyperbolic} and \ref{theorem:eigenvector} apply, subject to the presence of a saddle fixed point
$ a $ of $ f $ such that $ D f(a) $ has eigenvalues $ \lambda_{\tn{s}} < 0 $ and $ \lambda_{\tn{u}} > 0 $.

When the nonautonomy is set to zero in this way, there is a strong connection to the scattering theory development by
Blazevski and collaborators \cite{blazevskidelallave,blazevskifranklin}, who are able to enunciate hyperbolic
trajectories and stable/unstable manifolds in terms of diffeomorphisms of the unperturbed entities, where the diffeomorphism
is expressed in terms of a `scattering map.'  Indeed, they show that for the present perturbative setting, this characterisation
is equivalent to a Melnikov approach \cite[\S3.1]{blazevskidelallave}, which is the basis for the computations of the present
article.  These methods which require the nonautonomy to decay to zero as $ t \rightarrow \pm \infty $ at a sufficiently
fast rate \cite{blazevskidelallave,blazevskifranklin} however do not apply to the infinite-time setting of 
Sections~\ref{sec:hyperbolic}--\ref{sec:eigenvector}, in which $ F(x,t,\eps) $ need not have such decay.

To compute the leading-order hyperbolic trajectory and stable/unstable manifold tangent vector rotations for (\ref{eq:infinite}),
one simply needs to set $ g $ to zero outside $ [-T,T] $ in Theorems~\ref{theorem:hyperbolic} and \ref{theorem:eigenvector}. Theorem~\ref{theorem:hyperbolic} gives the {\em leading-order} (in the nonautonony parameter $ \eps $) expression for the hyperbolic trajectory as
\begin{equation}
a^\star(t,T) := a +  \left[ \alpha_{\tn{u}}^\star(t,T) v_{\tn{u}}^\perp  + 
\frac{ \alpha_{\tn{u}}^\star(t,T) \left( v_{\tn{u}} \cdot v_{\tn{s}} \right) - \alpha_{\tn{s}}^\star(t,T) }{
\ v_{\tn{u}}^\perp \cdot v_{\tn{s}} } \, v_{\tn{u}} 
\right] \, , 
\label{eq:hyperbolicfinite}
\end{equation}
where
\begin{equation}
\alpha_{\tn{s}}^\star(t,T) := - e^{\lambda_{\tn{u}} t} \int_t^T e^{- \lambda_{\tn{u}} \tau} g_{\tn{s}}^{\edit{\dagger}} \left( a, \tau \right)   \, \d \tau
\label{eq:alphasfinite}
\end{equation}
and
\begin{equation}
\alpha_{\tn{u}}^\star(t,T) :=  e^{\lambda_{\tn{s}} t} \int_{-T}^t  e^{ -\lambda_{\tn{s}} \tau} g_{\tn{u}}^{\edit{\dagger}} \left( a, \tau \right)  \, \d \tau \, .
\label{eq:alphaufinite}
\end{equation}
Since the original data was for $ t \in [-T,T] $, and thus the time-derivative would be valid in $ (-T,T) $, the quantity $ a^\star(t,T) $ would give an approximation for the hyperbolic
trajectory in the restricted time-domain $ (-T,T) $.  The superscript $ \star $ will be used hereafter to denote leading-order
approximations for finite-time analogues of entities.

What if $ g $ is extended in a different way to $ \R $?  Then the integral limits in (\ref{eq:alphasfinite}) and (\ref{eq:alphaufinite}) do not get clipped outside $ [-T,T] $, and thus will lead to a different hyperbolic trajectory to leading-order.
If $ g $ is extended to $ \R $ not by setting to zero, but by still following the reasonable hypothesis that $ \| g \| \le \eps $
(which was true in $ [-T,T] $), then the error between using (\ref{eq:alphasfinite}) and the correct $ g $, when projected 
in the $ v_{\tn{s}}^\perp $ direction, is bounded by
\[
\left| A_{\tn{s}}^\star(t,T) \right| \le  \eps e^{\lambda_{\tn{u}} t} \left| \int_T^\infty e^{- \lambda_{\tn{u}} \tau}   \, \d \tau \right| = 
\frac{\eps e^{\lambda_{\tn{u}}(t-T)}}{\lambda_{\tn{u}}} \, , 
\]
and similarly $ \left| A_{\tn{u}}^\star(t,T) \right| \le - \eps e^{\lambda_{\tn{s}}(t-T)}/\lambda_{\tn{s}} $ for the error in the $ v_{\tn{u}}^\perp $ direction.
These furnish bounds, to leading-order in the nonautonomous parameter, for possible extensions.  This approach of
attempting to characterise the effect of velocities from {\em outside} the interval in which data is available is 
an important aspect of finite-time analyses which has not been addressed until this, admittedly fairly limited, analysis.

It should be noted that there are several other suggestions for defining finite-time hyperbolic trajectories in terms
of exponential dichotomies, since 
(\ref{eq:exponentialdichotomy}) is trivially satisfied when $ t $ is restricted to a finite domain, and so `exponentially
decaying in finite-time' would require a stronger condition such as for example insisting on $ K_{\tn{s,u}} = 1 $ 
\cite{doan,karrasch,ducsiegmund,berger}.  This is a strong restriction.  The present approach is an alternative which has applicability if
the nonautonomy is small, and if there is sufficient confidence in the fact that the velocity does not change unduly outside
the interval in which data is available.

By using the extension (\ref{eq:gextend}) and the infinite-time flow (\ref{eq:infinite}), stable and unstable manifolds are
well-defined, and therefore so are their location tangent vectors.  Using Theorem~\ref{theorem:eigenvector}, their directions
 in a time-slice $ t \in (-T,T) $, to leading-order in the nonautonomous parameter,  will be given 
by anticlockwise rotational angles
\begin{equation}
\theta_{\tn{s}}^\star (t,T) := - \, e^{(\lambda_{\tn{u}}-\lambda_{\tn{s}})t} \int_t^T e^{(\lambda_{\tn{s}} - \lambda_{\tn{u}})\tau} v_{\tn{s}}^{\edit{\top}} \edit{D}  g_{\tn{s}}^{\edit{\dagger}} (a,\tau)  \, \d \tau  
\label{eq:slopesfinite}
\end{equation}
and
\begin{equation}
\theta_{\tn{u}}^\star (t,T)  :=   \, e^{(\lambda_{\tn{s}}-\lambda_{\tn{u}})t} \int_{-T}^t e^{(\lambda_{\tn{u}} - \lambda_{\tn{s}})\tau} v_{\tn{u}}^{\edit{\top}} \edit{D}  g_{\tn{u}}^{\edit{\dagger}} (a,\tau)  \, \d \tau  .
\label{eq:slopeufinite}
\end{equation}
of $ v_{\tn{s}} $ and $ v_{\tn{u}} $ respectively.
As $ \theta_{\tn{s}}^\star(T,T) = \theta_{\tn{u}}^\star(-T,T) = 0 $, 
the stable and unstable finite-time tangent vectors evolve \edit{continuously}
in $ (-T,\infty) $ and $ (-\infty, T) $  respectively.  One inevitable factor in this process is that since the data was only
available on $ [-T,T] $, the `guess' used for the data outside $ [-T,T] $ will affect the computed stable and unstable
manifolds.  As shown by Sandstede et~al \cite{finite}, the errors resulting
from extending $ g $ outside $ (-T,T) $ in a nontrivial but bounded way would imply that the invariant manifolds can only be characterised 
as `fat curves;' such nonuniqueness for finite time has also been identified and discussed in alternative ways
\cite{hallerpoje,miller,unsteady,branickiwiggins2010}.  

In what way can the fact that the data is limited to a finite time domain be used
to characterise how the flow entities would change {\em if data were available from outside the interval?}.  After all, in
many problems, practitioners are forced to work with a finite-time data set over some interval, when of course the Lagrangian
flow has been/will be impacted by velocities from outside that interval which are not available.
This is indeed examined in Section~\ref{sec:taylorgreen}
and compared with numerical computations, in a situation when $ \theta_{s}(t) $ is known. 
In general, it may not be.  Consider, then, a situation in which $ g $ is extended outside of $ (-T,T) $ in a nontrivial,
but still `reasonable' way, of letting the velocity shear be bounded in the form  $ \left| \sigma_{\tn{s,u}}(t) \right| \le \eps S $ for all $ t \in \R $.  The errors in using the zero-$ g $ approximations  can then be approximated.
If $ 
E_{\tn{s,u}}^{\star}(t,T):= \theta_{\tn{s,u}}^{\star}(t,T,) - \theta_{\tn{s,u}}(t,\infty) $,  then
\[
E_{\tn{s}}^{\star}(t,T) = e^{(\lambda_{\tn{u}}-\lambda_{\tn{s}})t} \int_T^\infty e^{(\lambda_{\tn{s}}-\lambda_{\tn{u}})\tau} \sigma_{\tn{s}}(\tau) \, \d \tau
\]
and
\[
E_{\tn{u}}^{\star} (t,T) = - e^{(\lambda_{\tn{s}}-\lambda_{\tn{u}})t} \int_{-\infty}^{-T} e^{(\lambda_{\tn{u}}-\lambda_{\tn{s}})\tau} \sigma_{\tn{u}}(\tau) \, \d \tau \, .
\]
from which it is possible to determine the error bounds
\begin{equation}
\left| E_{\tn{s}}^*(t,T) \right| \le \frac{ \eps S e^{(\lambda_{\tn{u}}-\lambda_{\tn{s}})(t-T)}}{\lambda_{\tn{u}} - \lambda_{\tn{s}}} \quad
{\mathrm{and}} \quad
\left| E_{\tn{u}}^*(t,T) \right| \le \frac{ \eps S e^{(\lambda_{\tn{s}}-\lambda_{\tn{u}})(t+T)}}{\lambda_{\tn{u}} - \lambda_{\tn{s}}} \, .
\label{eq:finitebound}
\end{equation}
Generically, exponential decay is to be expected in the finiteness parameter $ T $, with rate given by the {\em difference in the eigenvalues}.  Since to leading-order these are approximated by forward and backwards time finite-time
Lyapunov exponents which can be computed from the data, (\ref{eq:finitebound}) gives a method for estimating the
behaviour of the error due to the finiteness $ T $ of the data.

\subsection{Nonautonomously controlling manifold directions}
\label{sec:control}

This section addresses the inverse question of determining the
velocity required to ensure that the stable and unstable manifolds rotate nonautonomously by specified time-varying
angles.
Consider determining the control velocity $ c(x,t) $ such that
\begin{equation}
\dot{x} = f(x) + c(x,t)
\label{eq:control}
\end{equation}
moves the hyperbolic trajectory from $ a $ to a {\em specified} nearby time-varying location $ \tilde{a}(t) $, and simultaneously
rotates the \edit{local tangents} $ v_{\tn{s,u}} $ associated with $ c(x,t) \equiv 0 $ by {\em specified}, nonautonomously
changing, small angles $ \tilde{\theta}_{\tn{s,u}}(t) $.   

\begin{theorem}[Controlling hyperbolic trajectory location \cite{saddle_control}]
\label{theorem:saddlecontrol}
Let $ T = \infty $, and suppose (\ref{eq:control}) with $ c(x,t) \equiv 0 $ has a saddle fixed  point $ a $ with 
eigensystem $ \left\{ \lambda_{\tn{s,u}}, v_{\tn{s,u}} \right\} $.  Let $ \tilde{a}(t) $ be specified such that  
$ \left| \tilde{a}(t) - a \right| 
+ \left| \tilde{a}'(t) \right| \le \eps $ for all $ t \in \R $.  If $ c(x,t) $ is chosen such that
\begin{equation}
\left. \begin{array}{l}
c(a,t)^{\edit{\top}} v_{\tn{s}}^\perp = \left[ \tilde{a}'(t) - \lambda_{\tn{u}} \left( \tilde{a}(t) - a \right) \right]^{\edit{\top}} v_{\tn{s}}^\perp \\
c(a,t)^{\edit{\top}} v_{\tn{u}}^\perp = \left[ \tilde{a}'(t) - \lambda_{\tn{s}} \left( \tilde{a}(t) - a \right) \right]^{\edit{\top}}  v_{\tn{u}}^\perp
\end{array} \right\} \, , 
\label{eq:saddlecontrol}
\end{equation}
and also subject to the presence of $ A $ such that
\begin{equation}
\left| c(x,t) \right| + \left| D c(x,t) \right| \edit{+ \left| D^2 c(x,t) \right| }+ \left| \frac{\partial c(x,t)}{\partial t} \right| \le \eps A \quad {\mathrm{for}} \, 
(x,t) \in \Omega \times \R \, , 
\label{eq:controlbound}
\end{equation}
then there is a $ K  $ and an actual hyperbolic trajectory $ a(t) $ of (\ref{eq:control}) such that
\[
\left| \tilde{a}(t) - a(t) \right| \le \eps^2 K \quad{\mathrm{for~all}} \, t \in \R \, .
\]
\end{theorem}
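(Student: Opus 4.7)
The plan is to apply Theorem~\ref{theorem:hyperbolic} to the controlled system (\ref{eq:control}), viewing $ c(x,t) $ as the nonautonomous perturbation $ g $.  The bound (\ref{eq:controlbound}) together with the smoothness of $ c $ ensure that Hypothesis~\ref{hyp:F} and the estimate (\ref{eq:gbound}) are both satisfied, so Theorem~\ref{theorem:hyperbolic} yields an actual hyperbolic trajectory $ a(t) $ of (\ref{eq:control}) whose projections onto $ v_{\tn{s}}^\perp $ and $ v_{\tn{u}}^\perp $ are given by (\ref{eq:hyperbolics})--(\ref{eq:hyperbolicu}) with $ g $ replaced by $ c $ and with error terms $ \eps^2 H_{\tn{s,u}}(t,\eps) $ satisfying uniform bounds $ |H_{\tn{s,u}}| \le K_{\tn{s,u}} $.

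The key observation is that, under the control prescription (\ref{eq:saddlecontrol}), the integrand appearing in (\ref{eq:hyperbolics}) is an exact $ \tau $-derivative.  Setting $ \beta(\tau) := e^{-\lambda_{\tn{u}} \tau}(\tilde{a}(\tau) - a)^{\top} v_{\tn{s}}^\perp $, one computes directly that
\[
\beta'(\tau) = e^{-\lambda_{\tn{u}}\tau}\bigl[\tilde{a}'(\tau) - \lambda_{\tn{u}}(\tilde{a}(\tau) - a)\bigr]^{\top} v_{\tn{s}}^\perp = e^{-\lambda_{\tn{u}}\tau}\, c(a,\tau)^{\top} v_{\tn{s}}^\perp
\]
by the first row of (\ref{eq:saddlecontrol}).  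The integral from $ t $ to $ \infty $ in (\ref{eq:hyperbolics}) therefore collapses to $ \beta(\infty) - \beta(t) = -e^{-\lambda_{\tn{u}} t}(\tilde{a}(t) - a)^{\top} v_{\tn{s}}^\perp $, the boundary contribution at $ +\infty $ vanishing because $ \lambda_{\tn{u}} > 0 $ and $ |\tilde{a}(\tau) - a| \le \eps $.  Multiplying through by the prefactor $ -e^{\lambda_{\tn{u}} t} $ then yields $ [a(t) - \tilde{a}(t)]^{\top} v_{\tn{s}}^\perp = \eps^2 H_{\tn{s}}(t,\eps) $.  An entirely parallel integration by parts applied to (\ref{eq:hyperbolicu}), using the second row of (\ref{eq:saddlecontrol}) and the vanishing of the boundary term at $ -\infty $ (forced by $ \lambda_{\tn{s}} < 0 $), produces $ [a(t) - \tilde{a}(t)]^{\top} v_{\tn{u}}^\perp = \eps^2 H_{\tn{u}}(t,\eps) $.

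Since $ \lambda_{\tn{s}} \ne \lambda_{\tn{u}} $ ensures that $ v_{\tn{s}} $ and $ v_{\tn{u}} $, and hence $ v_{\tn{s}}^\perp $ and $ v_{\tn{u}}^\perp $, are linearly independent in $ \R^2 $, the two scalar projections above determine $ a(t) - \tilde{a}(t) $ uniquely.  The uniform bounds $ |H_{\tn{s,u}}| \le K_{\tn{s,u}} $ then translate into $ |\tilde{a}(t) - a(t)| \le \eps^2 K $ for all $ t \in \R $, with $ K $ depending on $ K_{\tn{s,u}} $ and on the conditioning of the basis $ \{v_{\tn{s}}^\perp, v_{\tn{u}}^\perp\} $.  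The main technical point is to verify cleanly that Theorem~\ref{theorem:hyperbolic} applies to the controlled system: the control $ c $ must be interpretable as an $ O(\eps) $ nonautonomy in the sense of Hypothesis~\ref{hyp:F} and (\ref{eq:gbound}), which is precisely the content of (\ref{eq:controlbound}).  With this in place, the entire estimate reduces to the exact integration by parts above, requiring no further dichotomy or smallness argument.
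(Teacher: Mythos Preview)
Your proof is correct and complete.  The paper itself does not prove Theorem~\ref{theorem:saddlecontrol} at all; it simply cites \cite{saddle_control} and related work.  Your argument---applying Theorem~\ref{theorem:hyperbolic} with $ g = c $ and recognising that the control condition (\ref{eq:saddlecontrol}) makes each integrand an exact $ \tau $-derivative so that the integrals collapse to the desired projections---is exactly the natural route, and it is the same mechanism the paper uses in its proof of the companion Theorem~\ref{theorem:control} (there phrased as multiplying by an integrating factor and differentiating, which is your exact-derivative observation read in reverse).  Nothing is missing.
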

\begin{proof}
This result already appears in the literature in a slightly different form \cite{saddle_control}; formal methods for achieving higher-order accuracy \cite{controlnd}, or of stabilising the trajectory \cite{yagasakicontrol} are also available.
\end{proof}

Theorem~\ref{theorem:saddlecontrol} relates to `inverting' Theorem~\ref{theorem:hyperbolic}.  Of note here is the
fact that there is no $ \eps $ explicit to the velocity field, but rather $ \eps $ is a parameter representing how large the deviation
of the required hyperbolic trajectory $ \tilde{a} $ is from the uncontrolled hyperbolic fixed point $ a $.  Under the specified form
(\ref{eq:saddlecontrol}) of the control velocity, the \edit{ $ {\mathrm{C}}^0 $-norm error of using the desired hyperbolic trajectory as the actual one} will be of order $ \eps^2 $ \edit{in the following sense}.  \edit{If a particular hyperbolic trajectory with $ \eps = 0.1 $ is specified, Theorem~\ref{theorem:saddlecontrol} indicates that the error between the required and actual hyperbolic trajectory 
resulting from applying the control velocity in (\ref{eq:saddlecontrol}) would be bounded by $ 0.1 K $, where $ K $ is
independent of $ t $.  On the other hand, if {\em another} hyperbolic trajectory with $ \eps = 0.01 $ is specified and
$ c $ is chosen subject to (\ref{eq:saddlecontrol}), then the error between the actual and desired trajectory would be
bounded by $ 0.0001 K $, for {\em exactly the same $ K $}.} 
Next, the main contribution of this article towards a control strategy---manipulating the directions at which the stable and unstable
manifold emanate---is stated.

\begin{theorem}[Controlling local manifold directions]
\label{theorem:control}
Under the hypotheses of Theorem~\ref{theorem:saddlecontrol}, 
suppose also $ \tilde{\theta}_{s,u}: \R \rightarrow \R $ such that
$ \left| \tilde{\theta}_{\tn{s,u}}(t) \right| + \left| \tilde{\theta}_{\tn{s,u}}'(t) \right| < \eps $ for all $ t $.  
If $ c(x,t) $ is chosen subject to the velocity shear conditions
\begin{equation}
\left. v_{\tn{s}}^{\edit{\top}}  \edit{D} \left[ c(x,t)^{\edit{\top}}  v_{\tn{s}}^\perp \right] \right|_{x=a}  = 
 \tilde{\theta}_{\tn{s}}'(t)  - \left( \lambda_{\tn{u}} - \lambda_{\tn{s}} \right) \tilde{\theta}_{\tn{s}}(t)
 \label{eq:controls}
\end{equation}
and
\begin{equation}
 \left. v_{\tn{u}}^{\edit{\top}} \edit{D} \left[ c(x,t)^{\edit{\top}}  v_{\tn{u}}^\perp \right] \right|_{x=a}  = 
 \tilde{\theta}_{\tn{u}}'(t)  + \left( \lambda_{\tn{u}} - \lambda_{\tn{s}} \right) \tilde{\theta}_{\tn{u}}(t) \, ,
\label{eq:controlu}
\end{equation}
and \edit{also subject to the smoothness condition (\ref{eq:controlbound}),} then there exist $ K_{\tn{s,u}} $ such that the actual rotational angles $ \theta_{\tn{s,u}}(t) $ of the stable and unstable manifolds at
the hyperbolic trajectory location $ a(t) $  of (\ref{eq:control}) satisfy
\[
\left| \theta_{\tn{s}}(t) - \tilde{\theta}_{\tn{s}}(t) \right| \le  \eps^2 K_{\tn{s}} \quad {\mathrm{and}} \quad
\left| \theta_{\tn{u}}(t) - \tilde{\theta}_{\tn{u}}(t) \right| \le \eps^2 K_{\tn{u}} \quad {\mathrm{for}} \, \edit{t \in \R} \, .  
\]
\end{theorem}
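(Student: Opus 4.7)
The plan is to recognise (\ref{eq:control}) as a realisation of the nearly autonomous framework (\ref{eq:flow}) with $F(x,t,\eps) = f(x) + c(x,t)$ and $g(x,t,\eps) = c(x,t)$, so that the smoothness/smallness bound (\ref{eq:controlbound}) plays exactly the role of (\ref{eq:gbound}) and Hypothesis~\ref{hyp:F} is verified. Theorem~\ref{theorem:saddlecontrol} then handles the location of the hyperbolic trajectory $a(t)$ directly, leaving only the rotation angles to be analysed.

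Next, I would apply Theorem~\ref{theorem:eigenvector} to the closed-loop system (\ref{eq:control}) to express the actual stable/unstable rotation angles as
\[
\theta_{\tn{s}}(t) = - e^{(\lambda_{\tn{u}}-\lambda_{\tn{s}})t}\int_t^\infty e^{(\lambda_{\tn{s}}-\lambda_{\tn{u}})\tau}\,\sigma_{\tn{s}}(\tau)\,\d\tau + \eps^2 E_{\tn{s}}(t,\eps),
\]
and the analogous expression for $\theta_{\tn{u}}(t)$, where $\sigma_{\tn{s,u}}(\tau) = v_{\tn{s,u}}^{\top} D[c(x,\tau)^{\top} v_{\tn{s,u}}^\perp]\big|_{x=a}$. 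The key observation is that these Melnikov-type integrals depend on $c$ only through its pointwise shear at the unperturbed fixed point $a$, which is precisely what the control prescriptions (\ref{eq:controls})--(\ref{eq:controlu}) pin down.

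The heart of the argument is then a direct verification via integration by parts. Substituting $\sigma_{\tn{s}}(\tau) = \tilde{\theta}_{\tn{s}}'(\tau) - (\lambda_{\tn{u}}-\lambda_{\tn{s}})\tilde{\theta}_{\tn{s}}(\tau)$ from (\ref{eq:controls}) into the integral above, one integrates the $\tilde{\theta}_{\tn{s}}'$ term by parts, using that $e^{(\lambda_{\tn{s}}-\lambda_{\tn{u}})\tau}\tilde{\theta}_{\tn{s}}(\tau) \to 0$ as $\tau\to\infty$ (since $\lambda_{\tn{s}}-\lambda_{\tn{u}}<0$ and $|\tilde{\theta}_{\tn{s}}|\le\eps$). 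The boundary contribution at $\tau=t$ combines with the remaining integral to cancel the $(\lambda_{\tn{u}}-\lambda_{\tn{s}})\tilde{\theta}_{\tn{s}}$ term exactly, yielding $\theta_{\tn{s}}(t) = \tilde{\theta}_{\tn{s}}(t) + \eps^2 E_{\tn{s}}(t,\eps)$. Setting $K_{\tn{s}}$ to be the constant supplied by Theorem~\ref{theorem:eigenvector} then gives the required bound. The unstable case is symmetric: integrate by parts over $(-\infty,t]$, using the decay of $e^{(\lambda_{\tn{u}}-\lambda_{\tn{s}})\tau}\tilde{\theta}_{\tn{u}}(\tau)$ as $\tau\to -\infty$.

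The main obstacle is really a consistency issue rather than a calculation. Theorem~\ref{theorem:eigenvector} requires the shear to be evaluated at the unperturbed fixed point $a$, yet (\ref{eq:controls})--(\ref{eq:controlu}) also only constrain the shear at $a$, so a single pointwise condition on $c$ suffices and a large family of admissible $c$'s will work; one must simply check that such a $c$ can be chosen compatibly with the hyperbolic trajectory conditions (\ref{eq:saddlecontrol}) and the regularity/smallness bound (\ref{eq:controlbound}), which is straightforward since all prescribed data are $\mathcal{O}(\eps)$. A secondary subtlety is that evaluating the shear at $a$ rather than along the actual hyperbolic trajectory $a(t)$ introduces a discrepancy of size $|Dc|\cdot|a(t)-a| = \mathcal{O}(\eps)\cdot\mathcal{O}(\eps) = \mathcal{O}(\eps^2)$, which is harmlessly absorbed into $E_{\tn{s,u}}(t,\eps)$.
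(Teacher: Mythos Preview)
Your proposal is correct and follows the same overall strategy as the paper: identify $g=c$, invoke Theorem~\ref{theorem:eigenvector} to express $\theta_{\tn{s}}(t)$ as the shear integral plus an $\eps^2 E_{\tn{s}}$ remainder, and then use the control condition (\ref{eq:controls}) to show the leading-order integral equals $\tilde{\theta}_{\tn{s}}(t)$.

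The only difference is in how that last step is carried out. You substitute $\sigma_{\tn{s}}(\tau)=\tilde{\theta}_{\tn{s}}'(\tau)-(\lambda_{\tn{u}}-\lambda_{\tn{s}})\tilde{\theta}_{\tn{s}}(\tau)$ directly into the integral and observe (via integration by parts, or equivalently by recognising the integrand as the exact derivative $\frac{\d}{\d\tau}\big[e^{(\lambda_{\tn{s}}-\lambda_{\tn{u}})\tau}\tilde{\theta}_{\tn{s}}(\tau)\big]$) that it collapses to $\tilde{\theta}_{\tn{s}}(t)$. The paper instead differentiates the integral formula for $\theta_{\tn{s}}$ to obtain $\sigma_{\tn{s}}(t)=\theta_{\tn{s}}'(t)+(\lambda_{\tn{s}}-\lambda_{\tn{u}})\theta_{\tn{s}}(t)-\eps^2[\dots]$, subtracts the control condition to get a first-order linear ODE for $\eta:=\tilde{\theta}_{\tn{s}}-\theta_{\tn{s}}$, and then solves that ODE by integrating factor on $[t,\infty)$. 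These are dual manipulations of the same identity; your direct evaluation is slightly more economical, while the paper's ODE route makes explicit why the bound on $\partial E_{\tn{s}}/\partial t$ in Theorem~\ref{theorem:eigenvector} is needed. Your final paragraph on the $a$ versus $a(t)$ discrepancy is unnecessary, since Theorem~\ref{theorem:eigenvector} already evaluates the shear at $a$ and absorbs everything else into $E_{\tn{s}}$.
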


\begin{proof}
Note that (\ref{eq:control}) is the same as (\ref{eq:flow}) under the identification $ F(x,t,\eps) = f(x) + c(x,t) $, and
thus $ g(x,t,\eps) = c(x,t) $.
However, there is no explicit $ \eps $-dependence in the vector field, which is only proper given that no such 
$ \eps $-dependence was specified in the required rotations, save for the fact that these rotations are of {\em maximum} size
$ \eps $.  
The result of Theorem~\ref{theorem:eigenvector} applied to this $ c $ leads to
\[
\theta_{\tn{s}}(t) = 
- \, e^{(\lambda_{\tn{u}}-\lambda_{\tn{s}})t} \int_t^\infty e^{(\lambda_{\tn{s}} - \lambda_{\tn{u}})\tau} \sigma_{\tn{s}}(\tau)  \, \d \tau  + 
\eps^2 E_{\tn{s}}(t,\eps) \, ,
\]
\edit{where $ E_{\tn{s}} $ and its $ t $-derivative are uniformly bounded for $ t \in \R $}, for the {\em actual} rotation of the local stable manifold.  Here, the 
velocity shear is, from (\ref{eq:shear}),
\[
\sigma_{\tn{s}}(t) = \edit{  v_{\tn{s}}^\top D c(a,t) v_{\tn{s}}^\perp } = 
\left. v_{\tn{s}} \cdot \nabla \left[ c(x,t) \cdot v_{\tn{s}}^\perp \right] \right|_{x=a} \, .
\]
  Therefore, 
\[
e^{(\lambda_{\tn{s}} - \lambda_{\tn{u}})t} \theta_{\tn{s}}(t) = 
- \, \int_t^\infty e^{(\lambda_{\tn{s}} - \lambda_{\tn{u}})\tau} \sigma_{\tn{s}}(\tau)  \, \d \tau  + 
\eps^2 E_{\tn{s}}(t,\eps) e^{(\lambda_{\tn{s}} - \lambda_{\tn{u}})t} \, ,
\]
which upon differentiating with respect to $ t $ gives
\[
e^{(\lambda_{\tn{s}} - \lambda_{\tn{u}})t} \left[ \theta_{\tn{s}}'(t) + \left( \lambda_{\tn{s}} - \lambda_{\tn{u}} \right) \theta_{\tn{s}}(t) \right] = 
 e^{(\lambda_{\tn{s}} - \lambda_{\tn{u}})t} \sigma_{\tn{s}}(t)    + 
\eps^2  e^{(\lambda_{\tn{s}} - \lambda_{\tn{u}})t} \left[ \frac{\partial E_{\tn{s}}(t,\eps)}{\partial t} + \left( \lambda_{\tn{s}} - \lambda_{\tn{u}} \right) 
E_{\tn{s}}(t,\eps) \right] \, .
\]
The actual velocity shear is therefore
\[
\sigma_{\tn{s}}(t) = \theta_{\tn{s}}'(t) + \left( \lambda_{\tn{s}} - \lambda_{\tn{u}} \right) \theta_{\tn{s}}(t) - \eps^2 \left[ \frac{\partial E_{\tn{s}}(t,\eps)}{\partial t} + 
\left( \lambda_{\tn{s}} - \lambda_{\tn{u}} \right) E_{\tn{s}}(t,\eps) \right] \, , 
\]
in terms of the {\em actual} rotation angle $ \theta_{\tn{s}} $.  On the other hand, the control velocity was chosen subject to
(\ref{eq:controls}) and hence
\[
\tilde{\theta}_{\tn{s}}'(t) + \left( \lambda_{\tn{s}} - \lambda_{\tn{u}} \right) \tilde{\theta}_{\tn{s}}(t)  = \theta_{\tn{s}}'(t) + \left( \lambda_{\tn{s}} - \lambda_{\tn{u}} \right) \theta_{\tn{s}}(t) - \eps^2 \left[ \frac{\partial E_{\tn{s}}(t,\eps)}{\partial t} + 
\left( \lambda_{\tn{s}} - \lambda_{\tn{u}} \right) E_{\tn{s}}(t,\eps) \right] \,  .
\]
Upon defining $ \eta(t) := \tilde{\theta}_{\tn{s}}(t) - \theta_{\tn{s}}(t) $, the differential equation
\[
\eta'(t) + \left( \lambda_{\tn{s}} - \lambda_{\tn{u}} \right) \eta(t)  = - \eps^2 \left[ \frac{\partial E_{\tn{s}}(t,\eps)}{\partial t} + 
\left( \lambda_{\tn{s}} - \lambda_{\tn{u}} \right) E_{\tn{s}}(t,\eps) \right] =: \eps^2 \tilde{E}_s(t,\eps) \, 
\]
results, where by Theorem~\ref{theorem:eigenvector}, $ \tilde{E}_s(t,\eps) $ is bounded  \edit{ for $ t \in \R $}.  Multiplying
by the integrating factor and integrating from a general time $ t $ to $ \infty $ yields
\[
\lim_{\tau \rightarrow \infty} \left[ e^{(\lambda_{\tn{s}} - \lambda_{\tn{u}})\tau} \eta(\tau)  \right] - e^{(\lambda_{\tn{s}} - \lambda_{\tn{u}})t} \eta(t) = \eps^2
\int_t^\infty e^{(\lambda_{\tn{s}} - \lambda_{\tn{u}})\tau} \tilde{E}_s(\tau,\eps) \, \d \tau \, .
\]
The limit on the left is zero because $ \theta $ and $ \tilde{\theta} $, and consequently $ \eta $, are bounded on \edit{$ \R  $},
and $ \lambda_{\tn{s}} - \lambda_{\tn{u}} < 0 $.
By virtue of the bound (call it $ \tilde{K}_{\tn{s}} $) of $ \tilde{E} $, 
\[
e^{(\lambda_{\tn{s}} - \lambda_{\tn{u}})t} \left| \eta(t) \right| \le \eps^2 \tilde{K}_{\tn{s}}  \int_t^\infty e^{(\lambda_{\tn{s}} - \lambda_{\tn{u}})\tau} \, \d \tau 
= \eps^2 \frac{\tilde{K}_{\tn{s}} e^{(\lambda_{\tn{s}} - \lambda_{\tn{u}})t} }{\lambda_{\tn{u}} - \lambda_{\tn{s}}} \, , 
\]
and therefore $ \left| \eta(t) \right| \le \eps^2 \tilde{K}_{\tn{s}} / (\lambda_{\tn{u}} - \lambda_{\tn{s}}) =: \eps^2 K_{\tn{s}} $, proving that the actual and the desired
rotations are bounded by an $ {\mathcal O}(\eps^2) $ quantity for $ t \in \edit{\R} $.  
The development so far was only for the rotation of the stable manifold; the expression (\ref{eq:controlu})
is similarly derived from (\ref{eq:slopeu}).
\end{proof}

Theorem~\ref{theorem:control} shows that \edit{local manifolds} can be
controlled via imposing a prescribed velocity shear \edit{at $ a $}.   The proof strategy above is similar to that previously used to control the hyperbolic trajectory locations \cite{saddle_control};
this has now been extended to be able to also control the directions of emanation of invariant manifolds.

\begin{remark}[Finite-time control of manifold directions] \normalfont
\label{remark:control}
Let $ T < \infty $, and suppose $ \tilde{\theta}_{s,u}^\star(t) $ is only specified for $ t \in (-T,T) $.  Then, by choosing
$ \tilde{\sigma}_{\tn{s,u}}^\star(t) $ for $ t \in (-T,T) $ as in Theorem~\ref{theorem:control}, the expectation is that corresponding
finite-time stable and unstable manifolds will emanate in the directions specified by $ \theta_{\tn{s,u}}^\star(t) $ to
leading-order in the nonautonomy parameter.  However, the
finiteness {\em will} contribute to a $ {\mathcal O}(\eps) $ error; see (\ref{eq:finitebound}), for example.

\end{remark}

\section{Implementation and verification}
\label{sec:taylorgreen}

\begin{figure}[t]
\begin{center}
\includegraphics[width=0.7 \textwidth, height=0.3 \textheight]{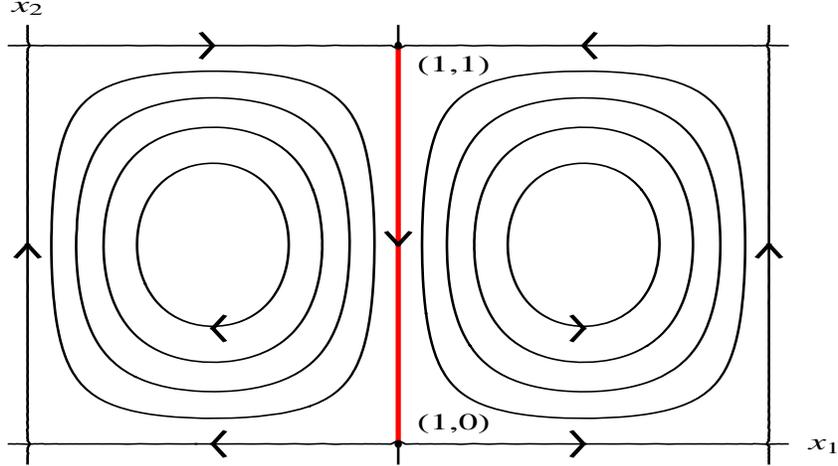}
\caption{The Taylor-Green flow
(\ref{eq:taylorgreen}); the key entity [heavy line] is the heteroclinic connection from $ (1,1) $ to $ (1,0) $. }
\label{fig:taylorgreen}
\end{center}
\end{figure}

This section numerically investigates the theory of the previous sections, specifically focussing on a finite-time
setting.  Firstly, a control condition on how the manifolds emanate is considered, and then numerically evaluated using 
finite-time Lyapunov exponents.  
Secondly, the influence of finiteness of time is assessed.
The system chosen for investigation is the well-known Taylor-Green flow \cite{chandrasekhar,taylorgreen,periodic}
\begin{equation}
\renewcommand{\arraystretch}{1.2}
\left.
\begin{array}{lcl}
\dot{x}_1 & = & - \pi A \, \sin \left( \pi x_1 \right) \, \cos \left( \pi x_2 \right) \\
\dot{x}_2 & = & \pi A \, \cos \left( \pi x_1 \right) \, \sin \left( \pi x_2 \right) \,
\end{array}
\right\}
\label{eq:taylorgreen}
\end{equation}
whose phase portrait (Fig.~\ref{fig:taylorgreen}) 
has a stable manifold coming vertically downwards to the saddle fixed point
$ (1,0) $, which is simultaneously a branch of the unstable manifold emanating downwards from the point
$ (1,1) $.  The relevant eigenvalues are $ \pm \pi^2 A $ at both these points.  The break up of these stable and unstable
manifolds under a perturbation results in transport between the left gyre $ [0,1] \times [0,1] $ and the right gyre
$ [1,2] \times [0,1] $.  

Being able to simultaneously control the two splitting manifolds has an impact on the transport between the two gyres.
Suppose a finite-time nonautonomous perturbation is to be introduced to (\ref{eq:taylorgreen}) such that the local
stable and unstable manifolds rotate by angles $ \tilde{\theta}_{\tn{s}}(t) $ and $ \tilde{\theta}_{\tn{u}}(t) $ respectively,
and the hyperbolic trajectories perturbing from $ (1,0) $ and $ (1,1) $ also follow a specified time-variation, where the
variations of these from the unperturbed situation is bounded by $ \eps $.
If $ c = ( \begin{array}{cc} c_1 & c_2 \end{array} )^\top $ is the control perturbation which achieves this subject to
an error which is bounded according to Theorems~\ref{theorem:saddlecontrol} and \ref{theorem:control}, the system is now
\begin{equation}
\renewcommand{\arraystretch}{1.2}
\left.
\begin{array}{lcl}
\dot{x}_1 & = & - \pi A \, \sin \left( \pi x_1 \right) \, \cos \left( \pi x_2 \right) + c_1 (x_1, x_2, t) \\
\dot{x}_2 & = & \pi A \, \cos \left( \pi x_1 \right) \, \sin \left( \pi x_2 \right) + c_2 (x_1,x_2, t) \,
\end{array}
\right\} \, .
\label{eq:taylorgreencontrol}
\end{equation}
Applying Theorem~\ref{theorem:control} for the stable manifold of the hyperbolic trajectory
near $ (1,0) $ leads to
\[
\frac{\partial c_1}{\partial x_2}(1,0,t) \approx - \left[ \tilde{\theta}_{\tn{s}}'(t) - 2 \pi^2 A \, \tilde{\theta}_{\tn{s}}(t) \right] \, ,
\]
and a similar analysis for the unstable manifold of the hyperbolic trajectory near $ (1,1) $ yields
\[
\frac{\partial c_1}{\partial x_2}(1,1,t) \approx - \left[ \tilde{\theta}_{\tn{u}}'(t) + 2 \pi^2 A \, \tilde{\theta}_{\tn{u}}(t) \right] \, .
\]

\subsection{Time-periodic example}

First, suppose that the hyperbolic trajectories are required to remain at their autonomous locations, but that
the stable and unstable manifold are to be moved in a time-periodic fashion.
From Theorem~\ref{theorem:saddlecontrol}, it is clear that choosing $
c_{1,2}(1,0,t) = 0 $ and $ c_{1,2}(1,1,t) = 0 $
makes the leading-order hyperbolic trajectory movement zero.
While complying with this, the required rotations of the tangent vectors can be realised by choosing 
$ c_2(x_1,x_2,t) \equiv 0 $ and 
\begin{equation}
c_1(x_1,x_2,t) = - \left[\tilde{\theta}_{\tn{s}}'(t) - 2 \pi^2 A \, \tilde{\theta}_{\tn{s}}(t)\right] x_2 I_{(1,0)}(x_1,x_2) 
- \left[ \tilde{\theta}_{\tn{u}}'(t) + 2 \pi^2 A \, \tilde{\theta}_{\tn{u}}(t) \right] (x_2 -1) I_{(1,1)}(x_1,x_2)
\label{eq:tgcontrol}
\end{equation}
where 
\[
I_{(x_1^0,x_2^0)}(x_1,x_2) = \frac{1}{2} \left[ \tanh \frac{\sqrt{(x_1-x_1^0)^2+(x_2-x_2^0)^2} + \delta}{\delta^2}  
- \tanh \frac{\sqrt{(x_1-x_1^0)^2+(x_2-x_2^0)^2} - \delta}{\delta^2} \right]
\]
is used as a \edit{smooth approximation} of an indicator function in a $ \delta $-radius ball around $ (x_1^0,x_2^0) $.
Now, if $ \theta_{\tn{s}} $ and $ \theta_{\tn{u}} $ are both positive, the stable and unstable manifolds will respectively emanate
towards the left gyre and the right gyre.  Any subsequent intersection pattern between them will then span a larger
area than if the manifolds emanated into the {\em same} gyre.  Since this intuition translates to whenever $ \theta_{\tn{s}} $
and $ \theta_{\tn{u}} $ have the same sign, one method for attempting to achieve greater transport would be to have
$ \tilde{\theta}_{\tn{s,u}}(t) $ oscillating in phase. 
To achieve this, choose $ \tilde{\theta}_{\tn{s,u}}(t) = \delta \cos \left( \omega t \right) $, and thus one can
take the bound in Theorem~\ref{theorem:control} to be $ \eps = \delta \left( 1 + \left| \omega \right| \right) $.  This gives
the control velocity
\begin{equation}
\left( \begin{array}{c}
c_1(x_1,x_2,t) \\ \mbox{} \\ c_2(x_1,x_2,t) \end{array} \right) 
=  \delta \left( \begin{array}{c}
\hspace*{-1cm} \left[ \omega \sin \left( \omega t \right) + 2 \pi^2 A \cos \left( \omega t \right) \right] x_2 I_{(1,0)}(x_1,x_2) \\
\hspace*{1cm} + 
\left[ \omega \sin \left( \omega t \right) - 2 \pi^2 A \cos \left( \omega t \right) \right] 
 (x_2-1) I_{(1,1)}(x_1,x_2) \\ 0 \end{array} \right) \, .
\label{eq:tgperiodic}
\end{equation}

\begin{figure}[t]
\includegraphics[width=\textwidth, height=0.3 \textheight]{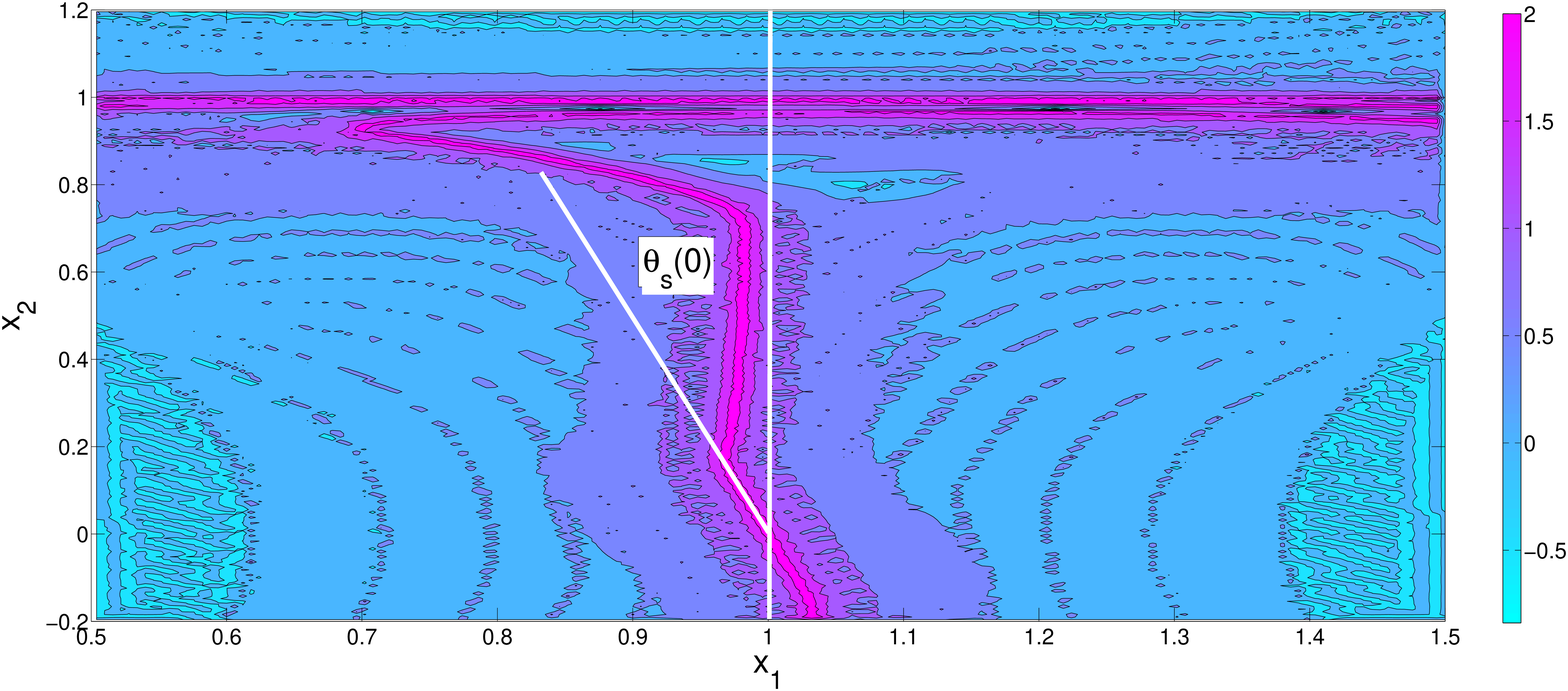}
\includegraphics[width=\textwidth, height=0.3 \textheight]{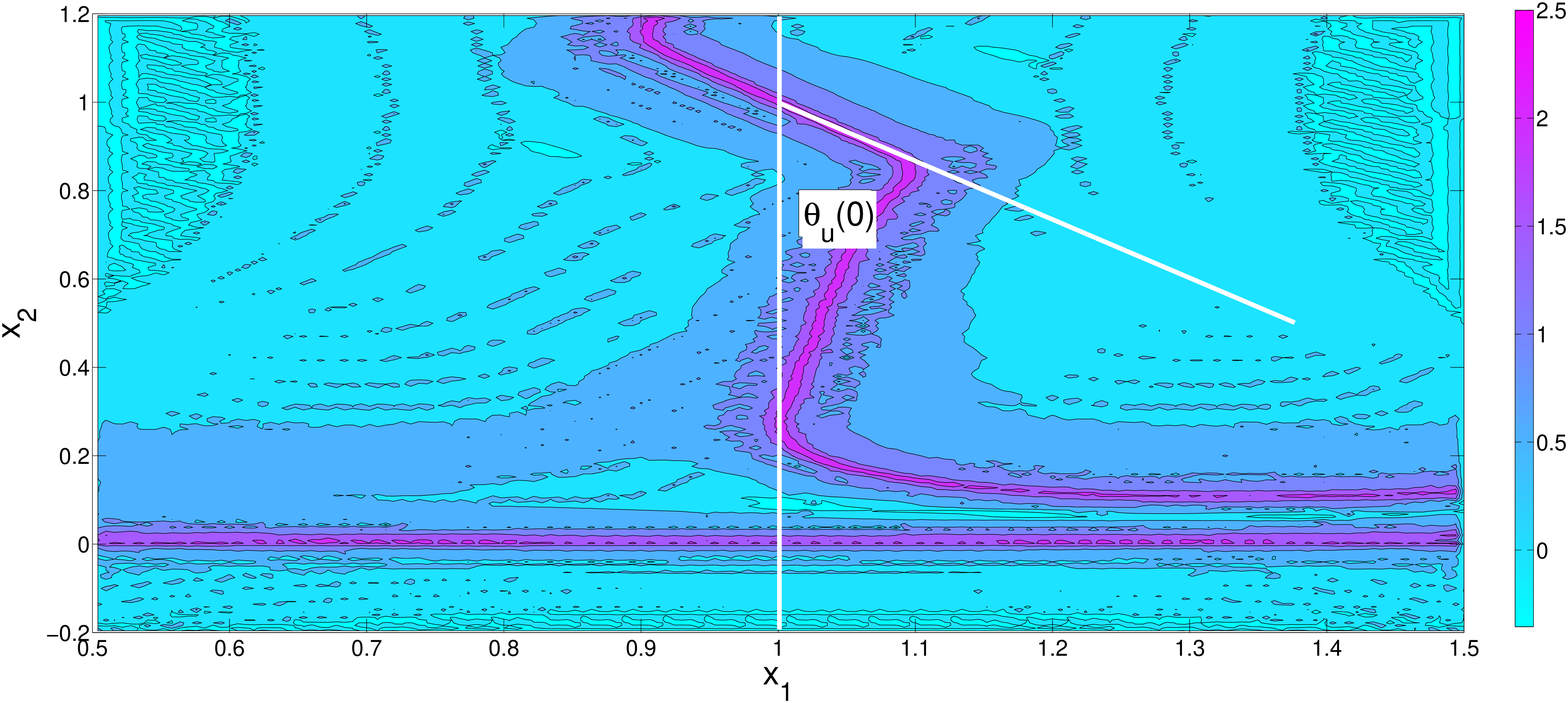}
\caption{Forward-time (top) and backward-time (bottom) FTLEs at $ t = 0 $ for the finite-time control velocity
(\ref{eq:tgperiodic}), with $ A=1 $, $ \omega = 2 \pi $, $ \delta = 0.2 $ and $ T = 2 $. }
\label{fig:ftle}
\end{figure}

To evaluate this method {\em in finite time}, the choice of parameters $ A = 1 $ and $ \omega = 2 \pi $ is used, and
thus $ \eps = \delta (1 + 4 \pi ) $.
The finiteness parameter is chosen as $ T = 2 $, which is double the period of $ c_1 $, and therefore not very large.
The value $ \delta = 0.2 $ is chosen to deliberately examine the efficacy of the method at relatively large $ \eps $
(which is $ 0.2 (1 + 4 \pi) \approx 2.71 $ in this case, in comparison to the unperturbed velocity scale of 
$ \pi A \approx 3.14 $) at
which the perturbative nature of the theory may be compromised.
 A temporal discretisation using $ 101 $ time-slices was used for the time-interval $ [-2,2] $ with a time-spacing
of $ 0.04 $.  FTLE fields were then calculated within each time-slice numerically\footnote{FTLE
computations were performed
using LCS Matlab kit Version~2.3 developed at the Biological Propulsion Laboratory, CalTech, and available at:
\edit{http://dabirilab.com/software/}.}, but using the {\em full} data available at each instance.  For example,
when considering the time-slice $ t = 0.44 $, forward FTLEs included data from $ t = 0.44 $ to $ t = 2.0 $ (a time-interval
of length $ 1.56 $) while backward FTLE used data from $ t = -2.0 $ to $ 0.44 $ (a time-length of $ 2.44 $).  This is
in keeping with the understanding that, given a certain finite-time data set, one would like to use the maximum
information available in that set in performing numerical calculations.  FTLEs were chosen as the 
diagnostic for finite-time versions of stable and unstable manifolds since when focussing near the relevant point,
their ridges appeared unambigiously.  Thus, technical modifications to counteract possible misdiagnoses by FTLEs
\cite{branickiwiggins2009,karraschhaller,shadden,norgardbremer,schindler,hallervariational} were not needed.  Moreover, 
FTLEs in their simplest sense are possibly the most commonly used diagnostic method in the literature, even though
other methods, or suitable refinements of existing methods 
\cite[e.g.]{hallerreview,karraschattract,nelsonjacobs,allshousepeacock,budisicthiffeault,fortin}, continue to be developed.
The results for $ t = 0 $ are shown
in Fig.~\ref{fig:ftle}, which shows the angular rotations $ \theta_{\tn{s,u}}(0) $ incurred by the clearly defined FTLE ridges.
Thus by focussing energy in $ \delta $-balls around $ (1,0) $ and $ (1,1) $
in a judiciously selected fashion, global transport has therefore been enhanced.

\begin{figure}[t]
\includegraphics[width=0.7 \textwidth, height=0.3 \textheight]{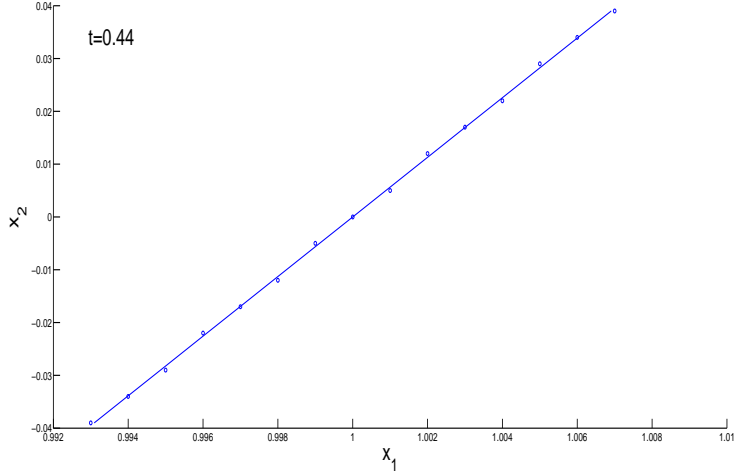}
\caption{Points on the extracted FTLE ridge [circles] near $ (1,0) $ in the time-slice $ t = 0.44 $; the corresponding
linear fit is shown by the straight line.}
\label{fig:ridgeextract}
\end{figure}

Henceforth, attention will be focussed on evaluating the accuracy of the stable manifold.  At each $ t $ value, 
points on the Lyapunov exponent ridge were extracted by zeroing in to the region 
$ -0.04 < x_2 < 0.04 $ in the vicinity of $ (1,0) $, and picking points from the FTLE field which lie above a cut-off threshold
($0.95$
of the maximum FTLE value).  This simple-minded ridge-extraction algorithm is sufficient for the purposes of computations
in this article, since a dominant ridge is present in the vicinity of $ (1,0) $; if not, more sophisticated approaches would be
necessary \cite{karraschhaller,schindler}.
An example is shown for $ t = 0.44 $ in Fig.~\ref{fig:ridgeextract}; the points essentially lie
along a straight line which, in this case, corresponds to a negative $ \theta_{\tn{s}} $ since the rotation is clockwise from the
vertical.  The slope
and intercept of this line were calculated using standard linear regression.  The negative reciprocal of the slope gives
$ \tan \theta_{\tn{s}} $, while the intercept can be used to compute the location of the perturbed hyperbolic trajectory.  
The $ x_1 $ coordinate of the hyperbolic trajectory variation with $ t $ is shown in the left panel of Fig.~\ref{fig:periodic}.  This is preserved
near $ x_1 = 1 $ with very high accuracy, as expected with the choice of $ c_1 = c_2 = 0 $ at $ (1,0) $ and Theorem~\ref{theorem:hyperbolic}.
The values of the computed $ \theta_{\tn{s}} $s from the FTLE ridge extraction procedure
are displayed in the right panel of Fig.~\ref{fig:periodic} as circles, for $ t $ values in $ [0,1] $.  
The numerical calculations were performed independently at each $ t $ value, to not prejudice their comparison to
the desired tangent vectors at independent times, rather than using improvements to FTLE ideas \cite{karraschattract} 
in which Lagrangian advection can be used to advantage.
These computed values of $ \theta_{\tn{s}}(t) $ 
are remarkably close to
the specified curve $ \tilde{\theta}_{\tn{s}}(t) = \delta \cos \left( \omega t \right) $, illustrating that the control strategy used is
highly effective even at this relatively large value of $ \eps $.  This offers evidence that
(\ref{eq:slopesfinite}) and (\ref{eq:slopeufinite}) offer excellent approximations in this case.

\begin{figure}[t]
\includegraphics[width=0.4 \textwidth, height=0.3 \textheight]{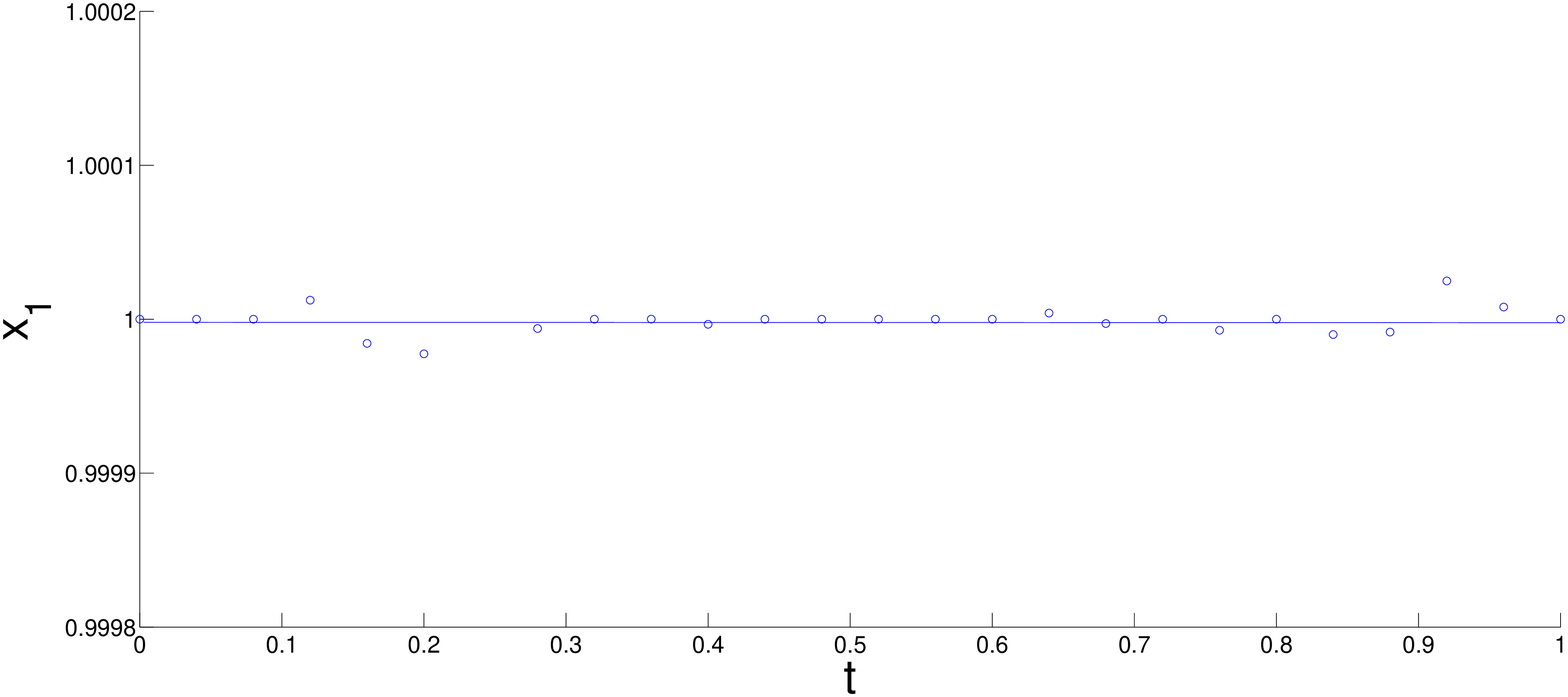}
\includegraphics[width=0.55 \textwidth, height=0.3 \textheight]{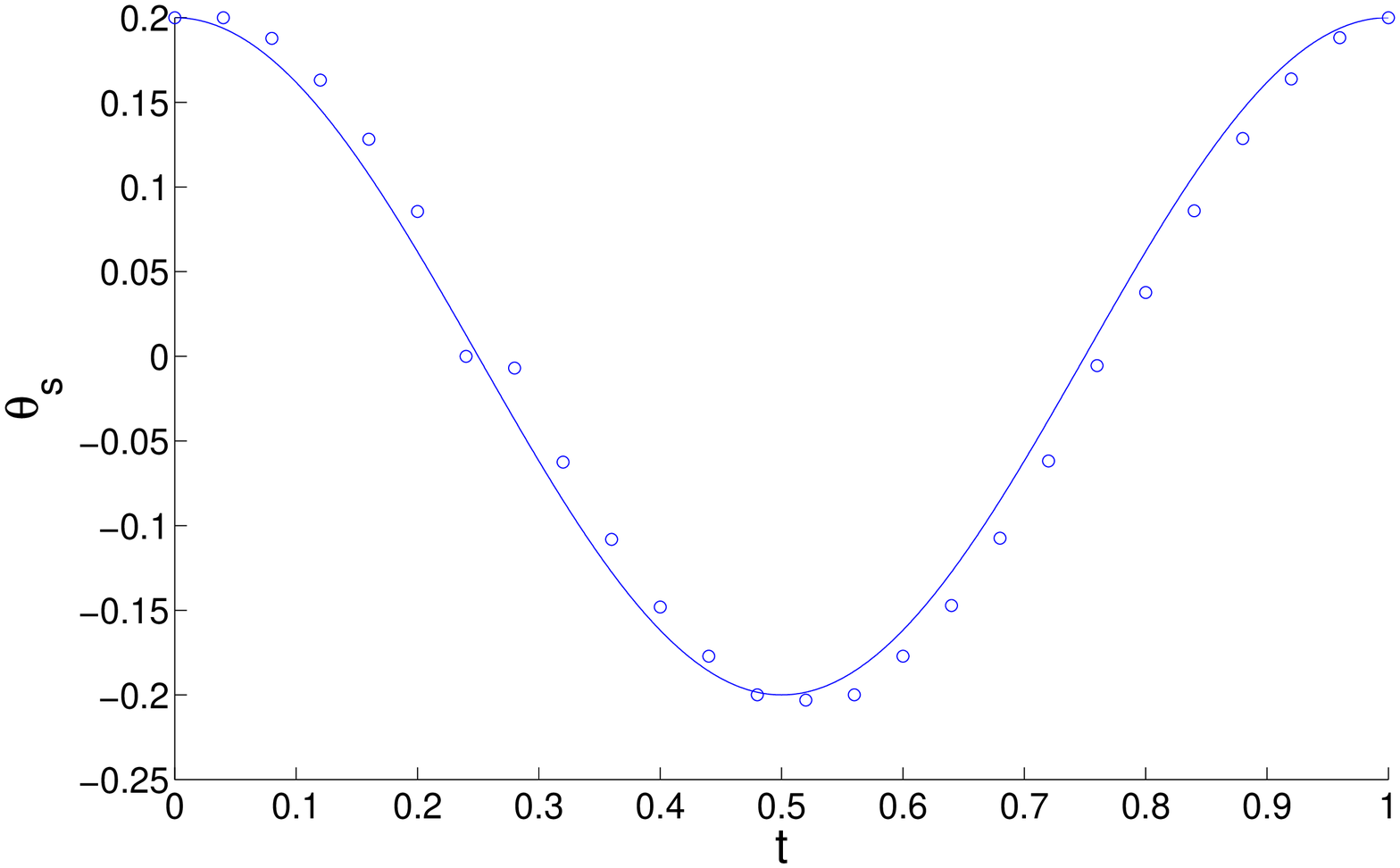}
\caption{Results for the control velocity (\ref{eq:tgperiodic}) with $ \delta = 0.2 $, $ A = 1 $,
$ \omega = 2 \pi $ and $ T = 2 $, extracted by computing the intercept and slope of the FTLE ridges using linear regression [circles]:
$ x_1 $-coordinate of hyperbolic trajectory (left panel) and $ \theta_{\tn{s}} $ (right panel) in comparison
with the expected curves [solid].}
\label{fig:periodic}
\end{figure}

The theoretical results indicated that the error would be of order $ \eps^2 $, which since $ \eps = \delta(1 + 4 \pi) $
is equivalent to the statement that it is of order $ \delta^2 $ in this situation.  To test this, the quantity $ E = 
\left| \tilde{\theta}_{\tn{s}}(0) - \theta_{\tn{s}}(0) \right| $ where $ \theta_{\tn{s}}(0) $ is that computed via the FTLE ridges and their slopes, was determined for
different values of $ \delta $.  The results are shown in the log-log plot of Fig.~\ref{fig:epsilon}.  
While the log-log plot
only approximately a straight line, it indicates that the error is approximately $ {\mathcal O}(\eps^{2.5}) $, which is consistent
with Theorem~\ref{theorem:eigenvector}.  It was observed that the 
spatial grid resolution is unable to see the perturbation if $ \delta \lesssim 0.2 $, below which the numerical computations return
$ \theta_{\tn{s}}(0) = 0 $.   A more refined grid will be necessary to push
the calculations to smaller $ \delta $s, resulting in significant computational cost.

\begin{figure}[t]
\centering
\includegraphics[width=0.6 \textwidth, height=0.3 \textheight]{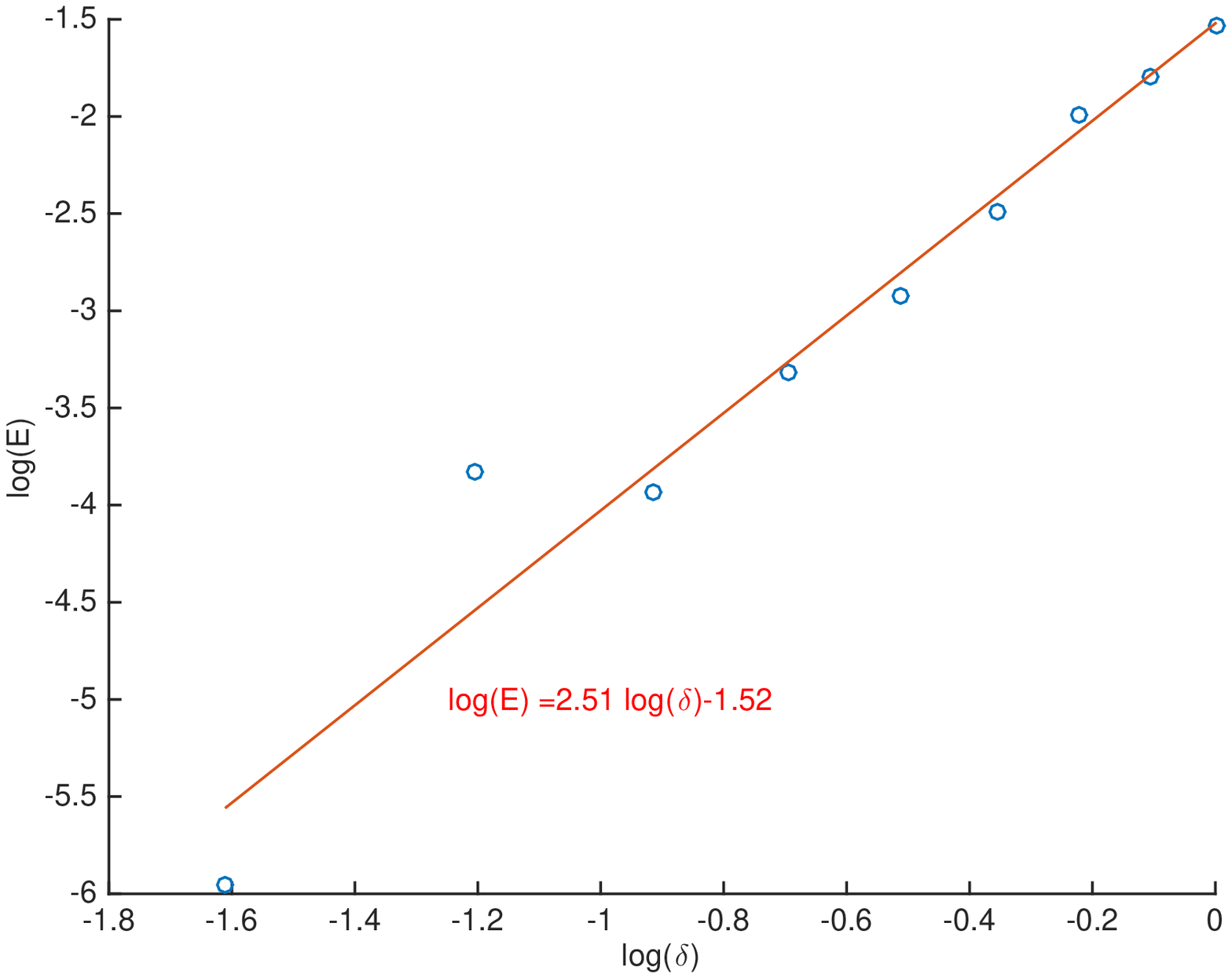}
\caption{Asymptotic order error analysis for the control velocity (\ref{eq:tgperiodic}) with $ A = 1 $,
$ \omega = 2 \pi $ and $ T = 1 $, where $ E = \left| \tilde{\theta}_{\tn{s}}(0) - \theta_{\tn{s}}(0) \right| $, displaying that
$ E \sim {\mathcal O}(\delta^{2.5}) = {\mathcal O}(\eps^{2.5}) $.}
\label{fig:epsilon}
\end{figure}

Next, the evaluation of the error as a consequence of clipping the data at a finite $ T $  was performed.
In keeping with finite-time reality, all the FTLE computations were run up to time $ T = 2 $, the largest value of time for which
data was considered available.  Thus each data point in Fig.~\ref{fig:periodic} was computed
by flowing forward over a {\em different} amount of time ($ T - t $ for differing values of $ t $).   
Assuming that the vector field is only defined
up to time $ T $, simplifying (\ref{eq:slopesfinite}) with these parameters and the desired $ \tilde{\theta}_{\tn{s}} $
gives the leading-order {\em finite-time} rotation of the stable manifold to be
\[
\tilde{\theta}_{\tn{s}}^{\star}(t,T) = \delta \cos \left( \omega t \right) - \delta e^{-2 \pi^2 A(T-t)} \cos \left( \omega T \right)  \, ,
\]
whereas the infinite-time leading-order value is $ \theta(t) = \delta \cos \omega t $.
Therefore, the error related to the finiteness of $ T $ is
\begin{equation}
E_{\tn{s}}^{\star}(t,T) := \tilde{\theta}_{\tn{s}}^{\star}(t,T) - \theta_{\tn{s}}(t) = - \delta e^{-2 \pi^2 A(T-t)} \cos \left( \omega T \right) \, ,
\label{eq:errorT}
\end{equation}
which gives a  estimate---based on the {\em desired} rotation $ \tilde{\theta}_{\tn{s}} $---of how the stable manifold rotation to leading-order in the nonautonomy is impacted by clipping the data at
different $ T $ values.  If fully infinite-time data ($ T = \infty $) is available, the leading-order error goes to zero.  If data 
from a clipped time $ [-T,T] $ is used, the leading-order in $ \eps $ (equivalently, in $ \delta $) stable manifold rotation
incurs an error characterised by (\ref{eq:errorT}).
Let $ t = 0 $ be fixed.  For $ T $ values in the range $ 0.2 $ to $ 1.0 $ in steps of $ 0.04 $, the forward
time FTLE can be computed using data in the interval $ [0,T] $.  For each such $ T $, the actual
$ \theta_{\tn{s}}^*(0,T) $
was computed numerically using the ridge extraction procedure and linear regression, as described before.
The important thing to note is that during each computation, the data is only assumed to be known in the
interval $ [0,T] $, which is different for each $ T $.  Since $ \theta_{\tn{s}}(0) = \delta \cos \left( \omega 0 \right) = \delta $,
the estimate for $ \theta_{\tn{s}}^*(0,T) - \theta_{\tn{s}}(0) $ can be found from the data, which is pictured by the filled
circles in Fig.~\ref{fig:error}.  The curve in Fig.~\ref{fig:error} is the right-hand side of (\ref{eq:errorT}) with $ t = 0 $; 
the differences between the curve and the circles is because the curve uses the {\em desired} value $ \tilde{\theta}_{\tn{s}} $
whereas the circles are the {\em obtained} values of $ \theta_{\tn{s}} $.

\begin{figure}[t]
\includegraphics[width=\textwidth, height=0.3 \textheight]{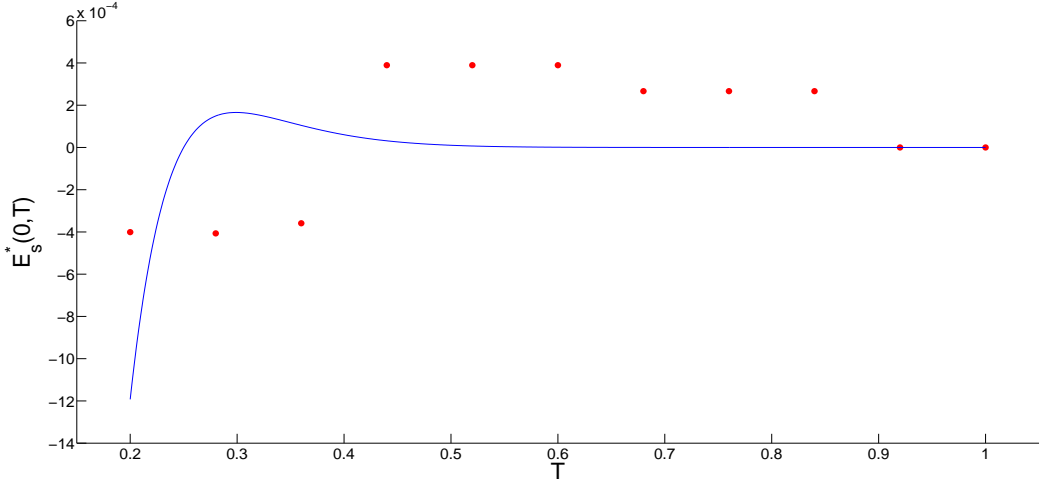}
\caption{Numerically computed values of $ \theta_{\tn{s}}^*(0,T) - \theta_{\tn{s}}(0) $ [filled circles], compared with the
error estimate (\ref{eq:errorT}) [solid curve], with $ \delta = 0.2 $, $ A = 1 $ and $ \omega = 2
\pi $.}
\label{fig:error}
\end{figure}

\subsection{Time-discontinuous example}

Time-periodicity is not a requirement of the theory.  In order to push this advantage further than one could
legitimately hope, suppose
\begin{equation}
\tilde{a}(t) = \left( 1 + \delta(4t\!-\!2) \left[ H(t\!-\!1/4) - H(t\!-\!3/4) \right], 0 \right)^\top \quad {\mathrm{and}} \quad
\tilde{\theta}_{\tn{s}}(t) = \delta \left[ 1 - H(t\!-\!1/2) \right] 
\label{eq:abrupt}
\end{equation}
where $ H(\centerdot) $ is the Heaviside function, and $ \delta > 0 $ a parameter.  This ambitious requirement does {\em not} comply with theoretical conditions
needed;   the bound $ \eps $ arising from $ \tilde{a} $, $ \tilde{\theta}_{\tn{s}} $ and their derivatives in Theorems~\ref{theorem:saddlecontrol} and \ref{theorem:control} does not exist at some points.  Moreover, abrupt switching of locations cannot be
achieved in smooth differential equations, and indeed the definition of a stable manifold collapses.
How well can the theory be used to help to achieve these computationally?

Choose \edit{smooth approximations} of (\ref{eq:abrupt}) given by
\begin{equation}
\tilde{a}(t) \! = \! \left( \! 1 \! + \! \delta(2t\!-\!1) \! \left[ \tanh \! \left( \frac{t \! \!-\! \! 1/4}{\delta^2} \right) \! - \! \tanh \! \left( \frac{t \! \!-\! \! 3/4}{\delta^2} \right) \right], 0 \right)^\top \! 
{\mathrm{and}} \, 
\tilde{\theta}_{\tn{s}}(t) \! = \! \delta \tanh \! \left( \frac{t\! \!-\! \!1/2}{\delta^2} \right) .
\label{eq:abruptsmooth}
\end{equation}
Theorem~\ref{theorem:saddlecontrol} gives the requirement $ c_2(a,t) = 0 $ and 
\begin{eqnarray*}
c_1(a,t) & = & \delta \left( 2 - 2 \pi^2 A t + \pi^2 A \right) \left[ \tanh \left( \frac{t-1/4}{\delta^2} \right) - \tanh \left( \frac{t-3/4}{\delta^2} \right) \right] \\
&& + \frac{2t-1}{\delta} \left[ \sech^2  \left( \frac{t-1/4}{\delta^2} \right) - \sech^2 \left( \frac{t-3/4}{\delta^2} \right) \right]  \, , 
\end{eqnarray*}
while applying the shear conditions of Theorem~\ref{theorem:eigenvector} as in the previous example locally near $ (1,0) $ gives
$ c_2(x,t) = 0 $ and 
\[
c_1(x,t) = \left[ \delta 2 \pi^2 A \tanh \left( \frac{t-1/2}{\delta^2} \right) - \frac{1}{\delta} \sech^2 \left( \frac{t-1/2}{\delta^2} \right) \right] x_2
I_{(1,0)}(x_1,x_2) \, .
\]
Thus, a control velocity which simultaneously attempts to achieve the required hyperbolic trajectory and tangent vector
rotation can be constructed by summing these:
\begin{eqnarray}
\hspace*{-1.5cm} \left( \begin{array}{c}
 \! c_1(x_1,x_2,t) \! \\ \mbox{} \\ \! c_2(x_1,x_2,t) \! \end{array} \right) 
& \! =  \! & \delta \! \left( \begin{array}{c}
\left( 2 \! - \! 2 \pi^2 A t \! + \! \pi^2 A \right) \left[ \tanh \! \left( \frac{t\! -\!1/4}{\delta^2} \right) \!-\! \tanh \!
\left( \frac{t\!-\!3/4}{\delta^2} \right) \right] \!+\! 
2 \pi^2 A \tanh \!\left( \frac{t\!-\!1/2}{\delta^2} \right) x_2 I_{(1,0)}(x_1,x_2)
 \\ 0 \end{array} \right) \nonumber \\
 & &   +    \frac{1}{\delta}  \left( \begin{array}{c} 
 (2t\!-\!1) \left[ \sech^2 \! \left(  \frac{t\!-\!1/4}{\delta^2} \right) \!-\! \sech^2 \!\left( \frac{t\!-\!3/4}{\delta^2} \right) \right]
 \!-\! \sech^2 \!\left( \frac{t\!-\!1/2}{\delta^2} \right) x_2 I_{(1,0)}(x_1,x_2) 
 \\ 0
 \end{array} \right) \, .
\label{eq:tgabrupt}
\end{eqnarray}
The $ 1/\delta $ terms above represent the spikes (associated with the time-derivatives in (\ref{eq:abrupt})) needed;
these are \edit{smooth approximations} of Dirac impulses.  The system (\ref{eq:taylorgreencontrol}) was numerically
examined with $ c $ given by (\ref{eq:tgabrupt}), with the choice of parameters $ A = 1 $ and $ \delta = 0.1 $.
The  $ t $-discretisation of the previous example was used, with
the forward FTLE field computed at each time using the maximum available data (i.e., till $ T = 2 $).  By
evaluating the slopes and intercepts from the extracted ridge, the values of $ \theta_{\tn{s}}(t) $ and
the $ x_1 $-component of $ a(t) $ were respectively computed at values of $ t \in [0,1] $.  The results, in circles,
are compared with the required curves (\ref{eq:abruptsmooth}) in Fig.~\ref{fig:abrupt}.  There is some error
near the abrupt changes, which is inevitable since the FTLE ridges become ambiguous at discontinuities.
Nevertheless, the efficacy of the control strategy in this nearly discontinuous situation is remarkable.

\begin{figure}[t]
\includegraphics[width=0.4 \textwidth, height=0.3 \textheight]{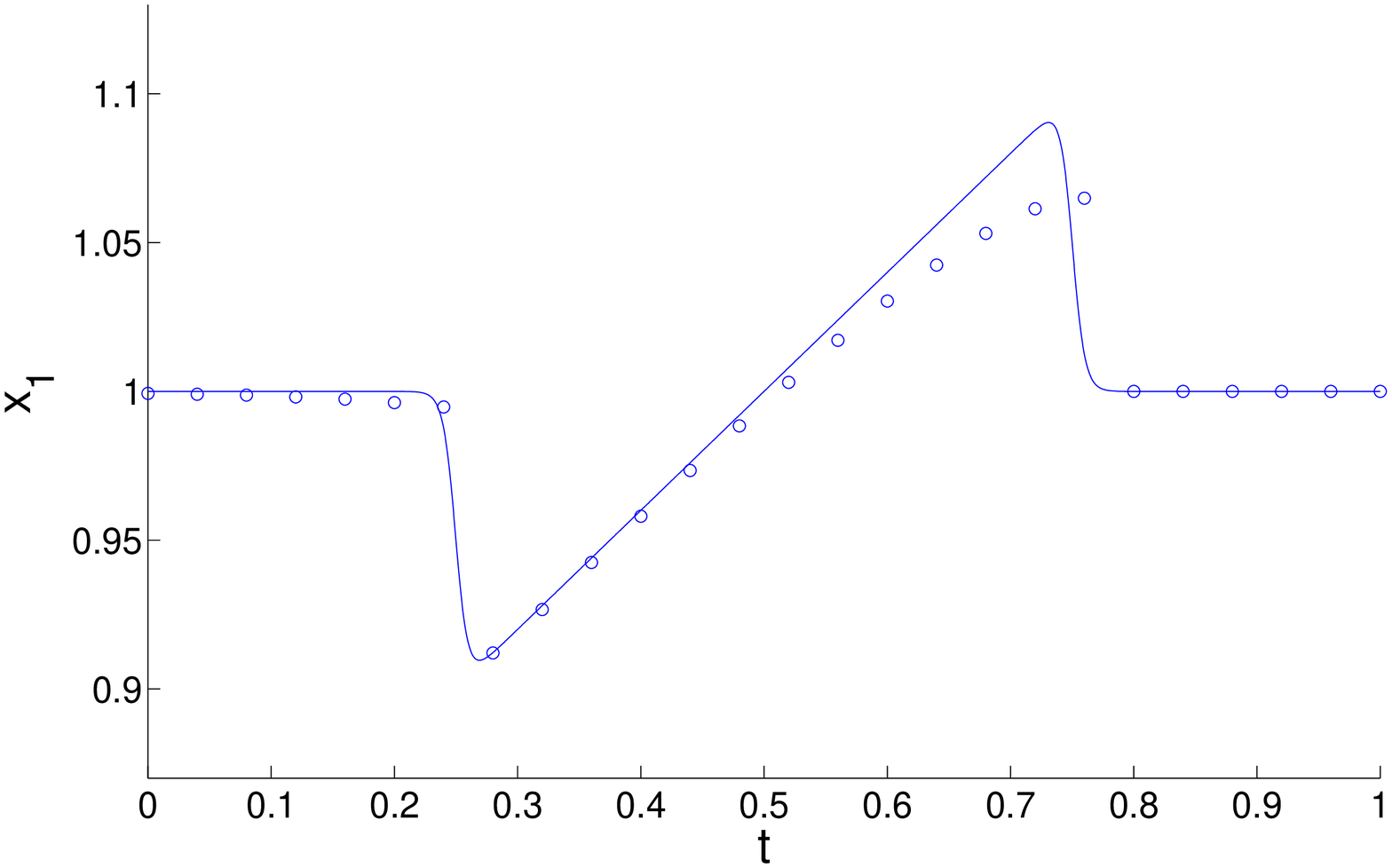}
\includegraphics[width=0.55 \textwidth, height=0.3 \textheight]{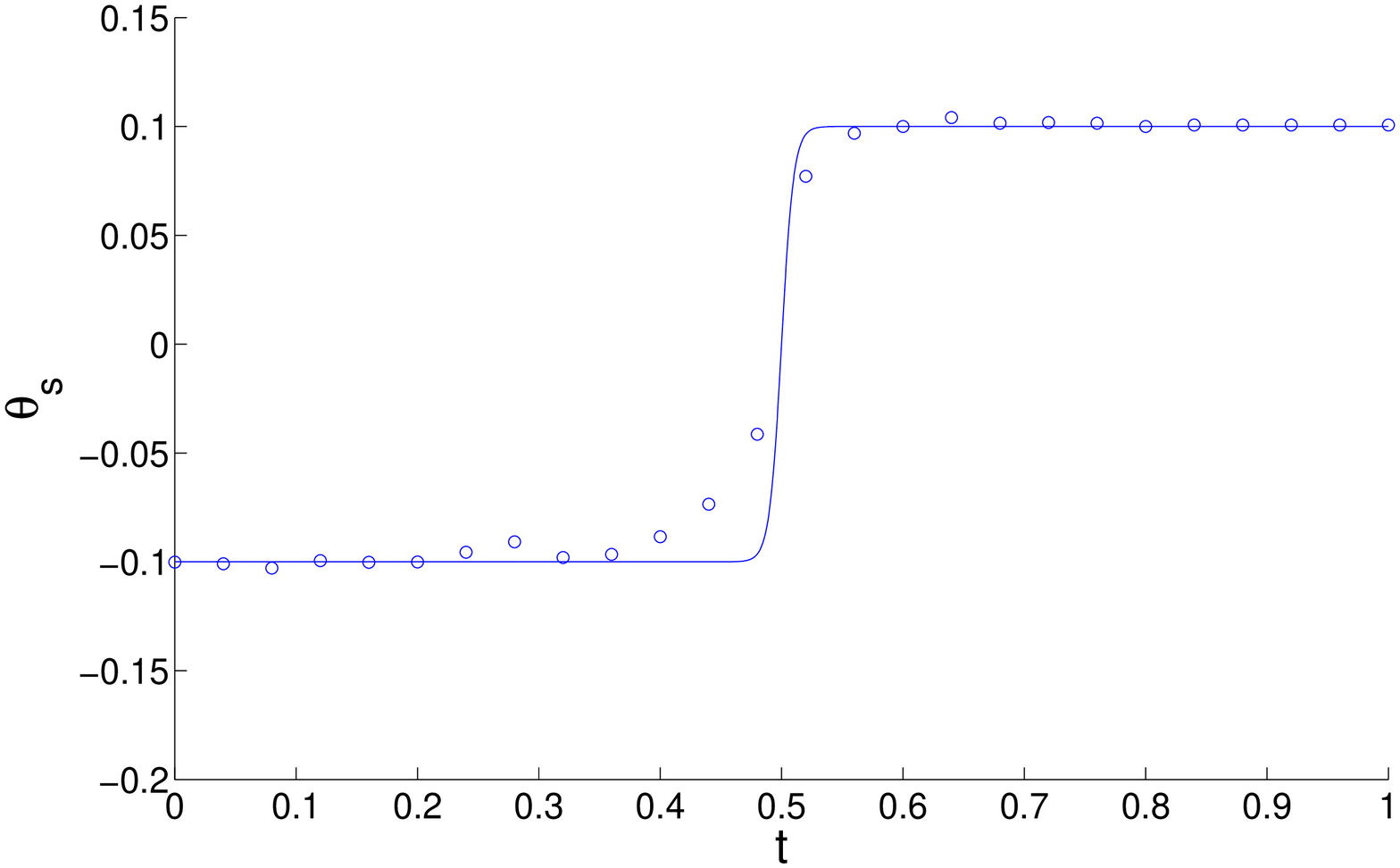}
\caption{Results for the control velocity (\ref{eq:tgabrupt}) with $ \delta = 0.1 $, $ A = 1 $,
and $ T = 2 $, extracted by computing the intercept and slope of the FTLE ridges using linear regression [circles]:
$ x_1 $-coordinate of hyperbolic trajectory (left panel) and $ \theta_{\tn{s}} $ (right panel) along with the
required curves (\ref{eq:abruptsmooth}) [solid].}
\label{fig:abrupt}
\end{figure}

\section{Concluding remarks}

This article approaches the issue of nonautonomous local tangents to
stable and unstable manifolds, from two perspectives.
  First, it hopes to address the finite-time situation in a
sense that would appear reasonable with data availability, while retaining the nonautonomous viewpoint
within this finite time-range.
Under the ansatz of the flow being nearly autonomous, leading-order approximations for \edit{hyperbolic trajectories
and their attached local stable/unstable manifolds} were stated.
The relationship between the \edit{local tangent vector} rotation and the nonautonomous velocity shear was quantified.    
An attempt to characterise how unknown data from outside a
finite-time interval affects flow entities was introduced, by investigating the dependence of an error which depends
on the finiteness parameter $ T $.
The nearly autonomous hypothesis is of course restrictive, and one potential extension would be to assume that the
flow is a perturbation of a nearby flow which, while not necessarily autonomous, has its relevant features (hyperbolic
trajectory, stable and unstable manifolds) {\em known}.  This idea was formally used recently \cite{controlnd} for
controlling high-dimensional hyperbolic trajectories.

Second, this article addresses the question of controlling the direction of
emanation of stable and unstable manifolds from a hyperbolic trajectory. Such local results influence the {\em global} stable and unstable manifolds, i.e., the global transport templates,
since exponential decay definitions \cite{coppel,battellilazzari,palmer} for global invariant manifolds depend on {\em local} decay.
Standard approaches for defining global stable and unstable manifolds \cite{guckenheimerholmes,arrowsmithplace} indeed
depend on first defining local stable/unstable manifolds, and then obtaining the global manifolds by `flowing' these in time.
The local shear velocity required for a given nonautonomous motion of these directions was obtained in Theorem~\ref{theorem:control}.  The finite-time version
of these (see Remark~\ref{remark:control}) simply uses the full data available in the time-range $ [-T,T] $ 
in doing the computation of the shear requirement.  The efficacy of this finite-time process was numerically demonstrated
using both a time-periodic and a time-discontinuous specification of the tangent vector directions, the latter situation 
pushing the boundaries of the theorems.  To achieve genuinely time-discontinous flow trajectories, impulsive terms are required in the velocities.  It has been shown that in the presence of impulsive vector fields
one needs to think of stable and unstable {\em pseudo}-manifolds which reset themselves at the time instance at which 
there is an impulse \cite{aperiodic,impulsive}.
For the purposes of the numerical verifications of this article, marginally \edit{smooth approximations}
of the discontinuous functions were employed.  In both the time-periodic and time-impulsive implementations of the
control strategy, excellent performance (as measured by the rotation of FTLE ridges with time) was obtained.  Any
specified (but small) time-varying reorientation of the stable and unstable manifold directions seems to be possible, 
providing a tool for controlling the essential skeleton of fluid flows.  In particular, by focussing energy on a {\em localised}
region near the time-varying hyperbolic trajectory, a global transport impact can be achieved.  Further analysis and
development of these idea, building also on \cite{saddle_control,controlnd,manifold_control}, are underway.

\appendix
\section{Proof of Theorem~\ref{theorem:eigenvector} (Local manifold directions)}
\label{app:eigenvector}

The system (\ref{eq:flow}) is, under the conditions of Hypothesis~\ref{hyp:F}, equivalent to
\begin{equation}
\dot{x} = f(x) + \left[ F(x,t,\eps) - F(x,t,0) \right] =: f(x) + \eps h(x,t) + {\mathcal O}(\eps^2)
\label{eq:fg}
\end{equation}
where the higher-order term in uniformly bounded, and $ g(x,t) = \eps h(x,t) + {\mathcal O}(\eps^2) $.  Let $ \bar{x}^{\tn{s}}(p) $ be a solution to (\ref{eq:fg}) when 
$ \eps = 0 $ such that $ \bar{x}^{\tn{s}}(p) \rightarrow a $ as $ p \rightarrow \infty $; this solution can be used to
parametrise a branch of $ a $'s stable manifold for $ p \in [P,\infty] $, for $ P $ as negative as required.
\edit{Balasuriya \cite{tangential,open} establishes a parametrisation of the perturbed stable manifold in his Theorems~2.7 and 2.8 \cite{tangential}.  These results shall be recast for the present context as
\begin{equation}
x^{\tn{s}}(p,t) = \bar{x}^{\tn{s}}(p) + \eps \frac{ M^{\tn{s}}(p,t)}{\left| f \left( \bar{x}^{\tn{s}}(p) \right) \right|^2} 
f^\perp \left( \bar{x}^{\tn{s}}(p) \right) + J^{\tn{n}} (p,t,\eps)
\frac{f^\perp \left( \bar{x}^{\tn{s}}(p) \right)}{\left| f \left( \bar{x}^{\tn{s}}(p) \right) \right|} 
+ J^{\tn{t}}(p,t,\eps) 
\frac{f \left( \bar{x}^{\tn{s}}(p) \right)}{\left| f \left( \bar{x}^{\tn{s}}(p) \right) \right|} \, , 
\label{eq:errorsplit}
\end{equation}
where the superscripts for $ J $ are for the {\em n}ormal and {\em t}angential components respectively.  The $ {\mathcal O}(\eps) $
term of the normal component is expressed in terms of the Melnikov function
\begin{equation}
M^{\tn{s}}(p,t) := - \int_t^\infty \exp \left[ \int_{\tau-t+p}^p {\mathrm{Tr}} \, D  f \left( \bar{x}^{\tn{s}}(\xi) \right) \d \xi \right] f^\perp \left( 
\bar{x}^{\tn{s}}(\tau-t+p) \right)^{\edit{\top}} h \left( \bar{x}^{\tn{s}}(\tau-t+p), \tau \right) \, \d \tau \, .
\label{eq:ms}
\end{equation}
It is possible \cite{tangential} to similarly write the tangential component in terms of a known expression for its
$ {\mathcal O}(\eps) $-term and a higher-order error term, but as will be seen, this precise expression will not be necessary 
for this proof.
}
Now, as $ p \rightarrow \infty $, $ x^{\tn{s}}(p,t) \rightarrow a(t) $, the hyperbolic trajectory, along the stable manifold
direction, in each fixed time-slice $ t $.  Thus, the tangent vector to this, at the point $ a(t) $, can be obtained by
applying the limit $ p \rightarrow \infty $ to the $ p $-derivative of $ x^{\tn{s}}(p,t) $.  This is 
\begin{eqnarray}
x_p^{\tn{s}} & =  & \bar{x}_p^{\tn{s}} + \eps \left[ \frac{M^{\tn{s}} f_p^\perp}{\left| f \right|^2} + \left( \frac{M_p^{\tn{s}}}{\left| f \right|^2} - 
\frac{2 M^{\tn{s}} f^{\edit{\top}} f_p}{\left| f \right|^4} \right) f^\perp \right]
\edit{+  \left[ \frac{J^{\tn{n}} f_p^\perp + J_p^{\tn{n}} f^\perp}{\left| f \right|} - \frac{J^{\tn{n}} f^\perp f^\top f_p}{\left| f\right|^3} \right] }
\nonumber \\
& & \edit{+  \left[ \frac{J^{\tn{t}} f_p + J_p^{\tn{t}} f}{\left| f \right|} - \frac{J^{\tn{t}} f f^\top f_p}{\left| f\right|^3} \right] }
\label{eq:xp}
\, , 
\end{eqnarray}
where the $ p $-subscript represents the partial derivative, and the arguments $ (p,t,\eps) $ for $ M^{\tn{s}} $ \edit{and
$ J^{\tn{n,t}} $}, and the argument $ \bar{x}^{\tn{s}}(p) $ for $ f $ have been suppressed for brevity.  Since the $ p \rightarrow \infty $
limit is required, in this limit
\begin{equation}
\bar{x}^{\tn{s}}(p) \sim a + c v_{\tn{s}} e^{\lambda_{\tn{s}} p}
\label{eq:barx}
\end{equation}
for a constant $ c \ne 0 $ can be applied; this is since the linearised flow $ \dot{y} = \left( D f \right) y $ dominates near $ a $, and
$ \bar{x}^{\tn{s}} $ specifically comes in along the stable manifold (tangential to $ v_{\tn{s}} $ with decay rate $ \lambda_{\tn{s}} $)
in this limit.  The $ c $ represents a choice of `initial condition' along the stable manifold, and as will be clear, is
inconsequential in the final result.
Now, $ f \left( \bar{x}^{\tn{s}}(p) \right) = \bar{x}_p^s(p) $ since $ \bar{x}^{\tn{s}}(p) $ is a solution to (\ref{eq:fg}) 
when $ \eps = 0 $, and thus
\begin{equation}
f \left( \bar{x}^{\tn{s}}(p) \right) \sim c \lambda_{\tn{s}} v_{\tn{s}} e^{\lambda_{\tn{s}} p} \quad , \quad 
f^\perp \left( \bar{x}^{\tn{s}}(p) \right) \sim c \lambda_{\tn{s}} v_{\tn{s}}^\perp e^{\lambda_{\tn{s}} p} \quad {\mathrm{and}} \quad
f_p \left( \bar{x}^{\tn{s}}(p) \right) \sim c \lambda_{\tn{s}}^2 v_{\tn{s}} e^{\lambda_{\tn{s}} p} \, .
\label{eq:flargep}
\end{equation}
\edit{Of the three bracketed terms in (\ref{eq:xp}), the first two are therefore vectors in the $ v_{\tn{s}}^\perp $ direction,
whereas the third is in the $ v_{\tn{s}} $ direction.  The second term in the limit $ p \rightarrow \infty $ behaves according to
\[
 \left[ \frac{J^{\tn{n}} f_p^\perp + J_p^{\tn{n}} f^\perp}{\left| f \right|} - \frac{J^{\tn{n}} f^\perp f^\top f_p}{\left| f\right|^3} \right] 
 \rightarrow \left[ J^{\tn{n}} {\mathrm{sign}} \left( c \right) \left| \lambda_{\tn{s}} \right| + J_p^{\tn{n}} {\mathrm{sign}} \left( c \lambda_{\tn{s}} \right) - J^{\tn{n}} {\mathrm{sign}} \left( c \right) \left| \lambda_{\tn{s}} \right| 
 \right] v_{\tn{s}}^\perp \, .
 \]
It is shown in Lemma~\ref{lemma:error} in Appendix~\ref{app:error}
that as $ p \rightarrow \infty $,  $ J^{\tn{n}}(p,t,\eps) $ in this
limit remains uniformly $ {\mathcal O}(\eps^2) $ for $ (t,\eps) \in \R \times [0,\eps_0) $. 
Thus, the error in discarding
 this term is $ {\mathcal O}(\eps^2) $ in the normal direction.  The third term of (\ref{eq:xp}) contains 
 the tangential projection $ J^{\tn{t}} $ and
 its $ p $-derivative, and in this case it will turn out that it is only required to show that this remains $ {\mathcal O}(\eps) $.
 This is easiest accomplished by analysing (\ref{eq:errorsplit}), from which
 \[
 J^{\tn{t}}(p,t,\eps) = \frac{f^\top \left( \bar{x}^{\tn{s}}(p) \right)}{\left| f \left( \bar{x}^{\tn{s}}(p) \right) \right|} \left[
 x^{\tn{s}}(p,t) - \bar{x}^{\tn{s}}(p) \right] \rightarrow v_{\tn{s}}^\top \left[ a(t) - a \right] \, .
 \]
 Since $ a(t) - a $ remains uniformly $ {\mathcal O}(\eps) $, this asserts the existence of a constant $ K_1 $ such that
 $ \lim_{p \rightarrow \infty} \left| J^{\tn{t}}(p,t,\eps) \right| \le \eps K_1 $.  Moreover, 
 \[
 J_p^{\tn{t}}(p,t,\eps) = \left( 
 \frac{f^\top}{\left| f \right|} \right)_p \left[ x^{\tn{s}} - \bar{x}^{\tn{s}} \right]
 + \frac{f^\top}{\left| f \right|} \left[ x_p^{\tn{s}} - \bar{x}_p^{\tn{s}} \right]   
 \]
  in which the first term remains $ {\mathcal O}(\eps) $ by the same argument.  The second term represents
  the difference between the tangent vector directions of the perturbed and unperturbed stable manifolds in the limit
  of approaching the hyperbolic trajectory, and is thus also uniformly $ {\mathcal O}(\eps) $ by persistence of
  invariant manifolds \cite{yi,yistability}.  Therefore, the final bracketed term in (\ref{eq:xp}) is bounded by a term
$ K_2 \eps $, where $ K_2 $ is independent of $ (t,\eps) $.
}

\edit{Collecting all this information together, and} substituting the large $ p $ values into the expression for $ x_p^s $ yields
\[
x_p^{\tn{s}} = c \lambda_{\tn{s}} e^{\lambda_{\tn{s}} p} v_{\tn{s}} + \frac{\eps}{c \lambda_{\tn{s}} e^{\lambda_{\tn{s}} p}} 
\left( M_p^{\tn{s}} - \lambda_{\tn{s}} M^{\tn{s}} \right) v_{\tn{s}}^\perp  \edit{ +
{\mathcal O}(\eps^2) v_{\tn{s}}^\perp + {\mathcal O}(\eps) v_{\tn{s}} }\, .
\]
The rotational angle $ \theta_{\tn{s}} $ from $ v_{\tn{s}} $ towards $ v_{\tn{s}}^\perp $ is $ {\mathcal O}(\eps) $ and
thus equal to $ \tan \theta_{\tn{s}} $ to leading-order.    This is essentially the slope of the above tangent line
in an axis system $ \left( v_{\tn{s}} , v_{\tn{s}}^\perp\right) $.  Thus, 
\begin{equation}
\theta_{\tn{s}} =  \frac{ \frac{\eps}{c \lambda_{\tn{s}} e^{\lambda_{\tn{s}} p}} \left( M_p^{\tn{s}} - \lambda_{\tn{s}} M^{\tn{s}} \right) + {\mathcal O}(\eps^2)}{
 c \lambda_{\tn{s}} e^{\lambda_{\tn{s}} p} + {\mathcal O}(\eps) } 
=  \eps \frac{ M_p^{\tn{s}} - \lambda_{\tn{s}} M^{\tn{s}}}{c^2 \lambda_{\tn{s}}^2 e^{2 \lambda_{\tn{s}} p}} + 
\edit{\eps^2 \tilde{E}_{\tn{s}}(t,\eps)}
\label{eq:thetatemp}
\end{equation}
as $ p \rightarrow \infty $,  \edit{where $ \tilde{E}_{\tn{s}}(t,\eps) $ is uniformly bounded for $ (t,\eps) $.}   The $ p $-derivative of (\ref{eq:ms}) is now required in the limit $ p \rightarrow \infty $.  In
this limit, $ {\mathrm{Tr}} \, D  f \left( \bar{x}^{\tn{s}}(p) \right) \rightarrow \lambda_{\tn{s}} + \lambda_{\tn{u}} $, the sum of $ D f  $'s eigenvalues at $ a $.  Putting this along with the other large $ p $ estimates 
in (\ref{eq:flargep}) into (\ref{eq:ms}) gives the large $ p $ estimate
\[
M^{\tn{s}}(p,t) = - c \lambda_{\tn{s}} e^{\lambda_{\tn{u}} t} e^{\lambda_{\tn{s}} p} \int_t^\infty e^{\lambda_{\tn{u}} \tau} h^{\edit{\top}} \left( a + c v_{\tn{s}}
e^{\lambda_{\tn{s}}(\tau-t+p)}, \tau \right) \edit{v_{\tn{s}}^\perp} \, \d \tau \, .
\]
When computing $ M_p^{\tn{s}} - \lambda_{\tn{s}} M^{\tn{s}} $, the fact that $ M^{\tn{s}} $ is a product of $ e^{\lambda_{\tn{s}} p} $ with another
function of $ p $ leads to cancellations, and results in
\begin{eqnarray*}
M_p^{\tn{s}} - \lambda_{\tn{s}} M^{\tn{s}} & = & - c \lambda_{\tn{s}} e^{\lambda_{\tn{u}} t} e^{\lambda_{\tn{s}} p} \int_t^\infty e^{-\lambda_{\tn{u}} \tau} \left( c \lambda_{\tn{s}} v_{\tn{s}} e^{\lambda_{\tn{s}}(\tau-t+p)} \right)^{\edit{\top}} \edit{D} \left[ h^{\edit{\top}}
\left( a + c v_{\tn{s}} e^{\lambda_{\tn{s}}(\tau-t+p)}, \tau \right) \edit{v_{\tn{s}}^\perp} \right] \, \d \tau \\
& = & - c^2 \lambda_{\tn{s}}^2 e^{(\lambda_{\tn{u}}-\lambda_{\tn{s}})t} e^{2 \lambda_{\tn{s}} p} \int_t^\infty
e^{(\lambda_{\tn{s}} - \lambda_{\tn{u}})\tau} v_{\tn{s}}^{\edit{\top}} \edit{D} \left[ h^{\edit{\top}} \left( a + c v_{\tn{s}} e^{\lambda_{\tn{s}}(\tau-t+p)}, \tau \right) \edit{v_{\tn{s}}^\perp} \right] \d \tau \, .
\end{eqnarray*}
Substituting into (\ref{eq:thetatemp}), putting in $ g = \eps h + {\mathcal O}(\eps^2) $,  and applying $ p \rightarrow \infty $ gives (\ref{eq:slopes}), where the $ {\mathcal O}(\eps^2) $ terms have been combined into one term $ \eps^2 E_{\tn{s}}(t,\eps) $
with bounded $ E_{\tn{s}}(t,\eps) $ for $ t \in \edit{\R} $.  

Now, (\ref{eq:slopeu}) is similarly derived, by using the unstable manifold formulation (Theorems~2.1 and 2.3) due to
Balasuriya \cite{tangential}.   There is no
substantive difference in the derivation strategy, which shall be skipped for brevity. \hfill $ \Box $

\section{\edit{Proof of normal error term being uniformly $ {\mathcal O}(\eps^2) $}}
\label{app:error}

\edit{
\begin{lemma}
\label{lemma:error}
Under the conditions of Theorem~\ref{theorem:eigenvector}, then there exists $ K $ such that 
 error term $ J^{\tn{n}}(p, t,\eps) $ in (\ref{eq:errorsplit})
satisfies 
\[
\limsup_{p \rightarrow \infty} \left\{ \left| J^{\tn{n}}(p, t,\eps) \right| + \left| \frac{\partial J^{\tn{n}}}{\partial p}(p,t,\eps)
\right| \right\} \le \eps^2 K \quad {\mathrm{for}} \, (t,\eps) \in \R \times [0,\eps_0) \, .
\]
\end{lemma}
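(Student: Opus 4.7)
My plan is to exhibit $J^{\tn{n}}(p,t,\eps)$ as an explicit residual of the decomposition (\ref{eq:errorsplit}) and then identify its $p\to\infty$ limit with the $\eps^{2}$ error in the hyperbolic-trajectory expansion already supplied by Theorem~\ref{theorem:hyperbolic}. Concretely, I would project (\ref{eq:errorsplit}) onto the unit normal $f^{\perp}(\bar{x}^{\tn{s}}(p))/|f(\bar{x}^{\tn{s}}(p))|$, obtaining
\[
J^{\tn{n}}(p,t,\eps)\;=\;\bigl[x^{\tn{s}}(p,t)-\bar{x}^{\tn{s}}(p)\bigr]^{\top}\frac{f^{\perp}(\bar{x}^{\tn{s}}(p))}{|f(\bar{x}^{\tn{s}}(p))|}\;-\;\eps\,\frac{M^{\tn{s}}(p,t)}{|f(\bar{x}^{\tn{s}}(p))|}\, .
\]
This identity is the backbone of the argument: it reduces the question to comparing two quantities whose $p\to\infty$ limits are each available from work done elsewhere in the paper.

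Taking $p\to\infty$ and invoking $x^{\tn{s}}(p,t)\to a(t)$ together with the asymptotics (\ref{eq:barx})--(\ref{eq:flargep}), the first term above converges to $\mathrm{sgn}(c\lambda_{\tn{s}})\,v_{\tn{s}}^{\perp\top}[a(t)-a]$. The large-$p$ evaluation of $M^{\tn{s}}$ already executed in the main proof of Theorem~\ref{theorem:eigenvector} gives
\[
\eps\,\frac{M^{\tn{s}}(p,t)}{|f(\bar{x}^{\tn{s}}(p))|}\;\longrightarrow\;-\mathrm{sgn}(c\lambda_{\tn{s}})\,\eps\, e^{\lambda_{\tn{u}}t}\int_t^{\infty}e^{-\lambda_{\tn{u}}\tau}\,h(a,\tau)^{\top}v_{\tn{s}}^{\perp}\,\d\tau\, ,
\]
which, since $g=\eps h+\mathcal O(\eps^{2})$, is precisely $\mathrm{sgn}(c\lambda_{\tn{s}})$ times the leading $\mathcal O(\eps)$ term in (\ref{eq:hyperbolics}). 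These two $\mathcal O(\eps)$ pieces cancel on subtraction, leaving $\lim_{p\to\infty}J^{\tn{n}}(p,t,\eps)=\mathrm{sgn}(c\lambda_{\tn{s}})\,\eps^{2}H_{\tn{s}}(t,\eps)$, which by Theorem~\ref{theorem:hyperbolic} is uniformly bounded by $\eps^{2}K_{\tn{s}}$.

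For the $p$-derivative, I would differentiate the boxed identity in $p$ and group the resulting pieces. Using $\bar{x}_p^{\tn{s}}=f(\bar{x}^{\tn{s}})=\mathcal O(e^{\lambda_{\tn{s}}p})$ and the analogous exponential decay of $x_p^{\tn{s}}$ along the perturbed stable fibre (from the persistence results \cite{yi,yistability}), the ``vector difference'' contribution is exponentially small. What remains is the Melnikov combination $M_p^{\tn{s}}-\lambda_{\tn{s}}M^{\tn{s}}$, whose explicit closed form was already computed in the main proof, divided by $|f|$; this must be matched against the $t$-derivative of the Theorem~\ref{theorem:hyperbolic} expansion in order to effect the cancellation at order $\eps$. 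The surviving term is $\mathrm{sgn}(c\lambda_{\tn{s}})\eps^{2}\,\partial_t H_{\tn{s}}(t,\eps)$ plus exponentially decaying corrections, and the $\partial_t H_{\tn{s}}$ bound in Theorem~\ref{theorem:hyperbolic} supplies the required uniform $\eps^{2}K$ estimate. The hard part will be this derivative step: several individual factors such as $1/|f|$ and $\frac{d}{dp}(1/|f|)$ blow up like $e^{-\lambda_{\tn{s}}p}$, so one must pair them with their $\eps$-order Melnikov partners before taking the $\limsup$ so that the divergent pieces cancel and only the $\eps^{2}$ remainder survives.
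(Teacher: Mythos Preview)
Your approach is genuinely different from the paper's. The paper does not project (\ref{eq:errorsplit}) and invoke Theorem~\ref{theorem:hyperbolic}; instead it quotes an explicit integral representation (adapted from \cite{tangential}) of the form
\[
J^{\tn{n}}(p,t,\eps)=\eps^{2}\int_{t}^{\infty}\frac{f^{\perp}\!\left(\bar{x}^{\tn{s}}(\tau-t+p)\right)^{\top}}{\left|f\!\left(\bar{x}^{\tn{s}}(p)\right)\right|}\,P(p,\tau,\eps)\,\d\tau,
\]
in which the $\eps^{2}$ prefactor is already explicit and $P$ (built from $D^{2}f$, $Dg$ and the first-order correction $x_{1}^{\tn{s}}$) is uniformly bounded; the large-$p$ asymptotics then reduce the integrand to an integrable $e^{\lambda_{\tn{s}}(\tau-t)}$ profile. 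Your route---recognising the $p\to\infty$ limit of the projected identity as exactly the $\eps^{2}H_{\tn{s}}(t,\eps)$ residual of (\ref{eq:hyperbolics})---is cleaner for the undifferentiated $J^{\tn{n}}$ and avoids importing the integral formula, at the cost of relying on Theorem~\ref{theorem:hyperbolic}. Note a logical delicacy: in this paper the bounding of $H_{\tn{s}}$ in Theorem~\ref{theorem:hyperbolic} is itself deferred to ``an argument similar to that of Theorem~\ref{theorem:eigenvector}'', so you must ensure that the $H_{\tn{s}}$ bound has been established independently (it can be, by a parallel integral-formula estimate) before invoking it here, or the reasoning becomes circular.

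Your treatment of $\partial_{p}J^{\tn{n}}$, however, mis-identifies what survives. Differentiating your projection identity gives three pieces: (i) $[x_{p}^{\tn{s}}-\bar{x}_{p}^{\tn{s}}]^{\top}f^{\perp}/|f|$, (ii) $[x^{\tn{s}}-\bar{x}^{\tn{s}}]^{\top}\partial_{p}(f^{\perp}/|f|)$, and (iii) $-\eps\,\partial_{p}(M^{\tn{s}}/|f|)$. Piece (ii) vanishes because $f^{\perp}/|f|\to\mathrm{sgn}(c\lambda_{\tn{s}})v_{\tn{s}}^{\perp}$, a $p$-independent limit. Piece (iii) does reduce to $-\eps(M_{p}^{\tn{s}}-\lambda_{\tn{s}}M^{\tn{s}})/|f|$ at leading order, but using the closed form computed in Appendix~\ref{app:eigenvector} one has $M_{p}^{\tn{s}}-\lambda_{\tn{s}}M^{\tn{s}}=\mathcal{O}(e^{2\lambda_{\tn{s}}p})$ while $|f|=\mathcal{O}(e^{\lambda_{\tn{s}}p})$, so the quotient carries an extra $e^{\lambda_{\tn{s}}p}\to 0$. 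There is therefore no $\mathcal{O}(\eps)$ residue to cancel, and no $t$-derivative of (\ref{eq:hyperbolics}) enters; the claimed surviving term $\mathrm{sgn}(c\lambda_{\tn{s}})\eps^{2}\partial_{t}H_{\tn{s}}$ is not what emerges. Once this is corrected, your argument simply yields $\limsup_{p\to\infty}|\partial_{p}J^{\tn{n}}|=0\le\eps^{2}K$, which suffices and matches the paper's conclusion by a different route.
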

}
\begin{proof}
\edit{
For any $ \tau \in [t,\infty) $, define $ x_1^{\tn{s}} $ through
\begin{equation}
x^{\tn{s}}(p,\tau) = \bar{x}^{\tn{s}}(\tau-t+p) + \eps x_1^{\tn{s}}(p,\tau,\eps) \, ,
\label{eq:x1def}
\end{equation}
Equation~(3.1) by
Balasuriya \cite{tangential}, developed for the unstable manifold in that case, can be adapted to quantify this. 
The error required is obtained by
replacing the unstable manifold with the stable one, dividing by $ \left| f \left( \bar{x}^{\tn{s}}(p) \right) \right| $,  and 
then integating from $ t $ to $ \infty $.  In the present context, this translates to
\begin{equation}
 J^{\tn{n}}(p,t,\eps) = \eps^2 \int_t^\infty \frac{f^\perp \left( \bar{x}^{\tn{s}}(\tau - t  + p) \right)^\top}{ 
 \left| f \left( \bar{x}^{\tn{s}}(p) \right) \right|}  P(p,\tau,\eps) \,  \d \tau \, , 
\label{eq:errorperp}
\end{equation}
where
\begin{equation}
P(p,\tau,\eps) := \left[ \frac{1}{2} x_1^{\tn{s}}(p,\tau,\eps)^\top D^2 f \left( y_1(p,\tau) \right) +  D g \left( y_2(p,\tau),\tau,
 \eps \right) \right] x_1^{\tn{s}}(p,\tau,\eps )
\label{eq:P}
\end{equation}
in which the $ y_1 $ and $ y_2 $ are locations on the line segment between  $ \bar{x}^{\tn{s}}(\tau-t+p) $ and
$ x^{\tn{s}}(p,\tau) $ arising from Taylor's theorem applied to
$ f $ and $ g $.  The uniform boundedness of $ P $ as $ p \rightarrow \infty $
is first argued.  In this limit, $ x^{\tn{s}}(p,\tau) \rightarrow a(t) $ and $ \bar{x}^{\tn{s}}(\tau-t+p) \rightarrow 
a $, with $ \left| a - a(t) \right| $ remaining uniformly $ {\mathcal O}(\eps) $ in $ t $.  Thus, $ \limsup_{p \rightarrow \infty} 
x_1^{\tn{s}}(p,\tau,\eps) $ remains bounded uniformly.  Moreover, the hypotheses on $ f $ and $ g $ ensure that the
$ D^2 f $ and $ D g $ terms are also uniformly bounded in $ \tau $ under this limit.  Now, applying the large $ p $ estimates 
$ f \left( \bar{x}^{\tn{s}}(p) \right) \sim c \lambda_{\tn{s}} v_{\tn{s}} e^{\lambda_{\tn{s}} p} $  to (\ref{eq:errorperp}), one gets
\begin{eqnarray}
\left| \limsup_{p \rightarrow \infty} 
 J^{\tn{s}}(p,t,\eps)  \right| & \le & \eps^2 K_1 \left| \int_t^\infty  \limsup_{p \rightarrow \infty} 
 \frac{  \lambda_{\tn{s}} e^{\lambda_{\tn{s}}(\tau - t + p)} }{\lambda_{\tn{s}} e^{\lambda_{\tn{s}} p}} \left( K_2^\top v_{\tn{s}}^\perp \right) \d \tau \right| \nonumber  \\
& \le & \eps^2 K_3 e^{- \lambda_{\tn{s}} t} \frac{ e^{\lambda_{\tn{s}} \tau} }{\left| \lambda_{\tn{s}} \right|} \Big]_t^\infty =
\eps^2 K_4 \, .
\label{eq:errorperpbound}
\end{eqnarray} 
In the above, the dominated convergence theorem allowed the limit to be moved into the integral, and $ K_i $s will be
used throughout this proof to indicate constants (scalar or vector, dependending on context) independent of $ (p,t,\eps) $.
For the proof of Theorem~\ref{theorem:eigenvector} as presented in 
Appendix~\ref{app:eigenvector}, it is not just $ J^{\tn{s}} $ but its $ p $-derivative which needs to be addressed.  
From (\ref{eq:errorperp}), 
\begin{eqnarray*}
\frac{\partial}{\partial p} \left[ 
 J^{\tn{n}}(p,\tau,\eps) \right] & = & \eps^2 \int_t^\infty \frac{\partial}{\partial p} \left[ \frac{f^\perp \left( \bar{x}^{\tn{s}}(\tau - t  + p) \right)^\top}{ 
 \left| f \left( \bar{x}^{\tn{s}}(p) \right) \right|}  \right] P(p,\tau,\eps) \,  \d \tau \, \\
 & & + \eps^2 \int_t^\infty \frac{f^\perp \left( \bar{x}^{\tn{s}}(\tau - t  + p) \right)^\top}{ 
 \left| f \left( \bar{x}^{\tn{s}}(p) \right) \right|}  \frac{\partial P}{\partial p}(p,\tau,\eps) \,  \d \tau \, .
 \end{eqnarray*}
 Now, the $ p $-derivative of $ P $ remains bounded uniformly as $ p \rightarrow \infty $ since $ D^3 f $ and $ D^2 g $ 
 are uniformly bounded by hypothesis.  Therefore the second integral above is uniformly bounded by exactly the same
 argument already made for the (undifferentiated) $ J^{\tn{n}} $.  In the first integral for large $ p $, the fact that the term
 whose $ p $ derivative is to be taken collapses to $ v_{\tn{s}}^\perp e^{\lambda_{\tn{s}}(\tau - t)} $ has already been 
 established in (\ref{eq:errorperpbound}).  Thus, that integral contibutes zero.  This establishes that the component of
 $ \partial J^{\tn{s}}(p,t,\eps) / \partial p $ in the normal direction  is uniformly $ {\mathcal O}(\eps^2) $ for
 $ (t,\eps) \in \R \times [0,\eps_0) $, thereby completing the proof of Lemma~\ref{lemma:error}.
}
\end{proof}


{\bf Acknowledgements:}
Support from Australian Research Council  grant FT130100484, conversations with Gary Froyland, \edit{and
critical feedback from an anonymous referee}, are 
gratefully acknowledged.



\end{document}